\numberwithin{equation}{section}
\newtheorem{theorem}{Theorem}[section]
\newtheorem{lemma}[theorem]{Lemma}
\newtheorem{corollary}[theorem]{Corollary}
\newtheorem{proposition}[theorem]{Proposition}
\newtheorem{definition}[theorem]{Definition}
\newtheorem{remark}[theorem]{Remark}
\newenvironment{proof}{{\bf Proof.}}{}
\title{The fragment of elementary plane Euclidean geometry based on perpendicularity alone with complexity PSPACE-complete}
\author[1]{Tatyana Ivanova}
\author[2]{Tinko Tinchev}
\affil[1]{Institute of Mathematics and Informatics\authorcr
		Bulgarian Academy of Sciences\authorcr
		e-mail: tatyana.ivanova@math.bas.bg}
\affil[2]{Faculty of Mathematics and Informatics\authorcr
		Sofia University "St. Kliment Ohridski"\authorcr
		e-mail: tinko@fmi.uni-sofia.bg}
\date{}
\begin{document}

\maketitle

\bigskip
\begin{abstract}
	
A. Tarski uses in his system for the elementary geometry only the primitive concept of \textit{point}, and the two primitive relations \textit{betweenness} and \textit{equidistance}. Another approach is the relations to be on \textit{lines} instead of points. W. Schwabh\"{a}user and L. Szczerba showed that \textit{perpendicularity} together with the ternary relation of \textit{co-punctuality} are sufficient for dimension two, i.e. they may be used as a system of primitive relations for elementary plane Euclidean geometry. In this paper we give a complete axiomatization for the fragment of elementary plane Euclidean geometry based on perpendicularity alone. We show that this theory is not finitely axiomatizable, it is decidable and the complexity is PSPACE-complete. In contrast the complexity of elementary plane Euclidean geometry is exponential.

\end{abstract}	
	
\section{Introduction}

At the end of the 19th and the beginning of the 20th century investigations of axiomatic approach to geometry were done by Hilbert, Pieri, Veblen and Pasch (see \cite{Hilbert, Pieri, Veblen04, Veblen14, Pasch}). 

The first-order theory of Euclidean geometry was called elementary Euclidean geometry.

Pieri introduced the relation $\mathbf{\Delta}$ with the following definition: $\Delta (abc) := ||ab|| = ||ac||$. The meaning of this relation is that the points $a$, $b$ and $c$ form an isosceles triangle with base $|bc|$ (in the degenerated cases either $b = c$ or $a$ is the midpoint of $|bc|$). Pieri proved that $\Delta$ is sufficient as only primitive relation in terms of which the whole elementary $n$-dimensional Euclidean geometry for $n \geq 2$ can be developed.

Veblen (\cite{Veblen04}) considered two primitive relations - the ternary relation of \textit{betweenness} $\mathbf{B}$ and the quaternary relation of \textit{equidistance} $\bm{\equiv}$. If $a$, $b$, $c$ and $d$ are points, then $B(a,b,c)$ means that $b$ lies on the line segment joining $a$ and $c$; $ab\equiv cd$ means that the distance from $a$ to $b$ is the same as the distance from $c$ to $d$. Veblen proved that these primitive relations are sufficient for the elementary geometry.
He showed that $\equiv$ is undefinable in terms of $B$, although he believed that he had proved the contrary (\cite{Veblen14}). This mistake was noticed by Tarski (\cite{Tarski56}).

On the other hand, Pieri (\cite{Pieri}) showed that $B$ is first-order definable in terms of $\equiv$ for every dimension $n\geq 2$. Thus $\equiv$ can be taken as the only primitive for the elementary geometry.
	
In the 1920-70’s Tarski created an axiomatic system for the elementary $n$-dimensional Euclidean geometry for $n\geq 2$ and this axiomatic system was improved by Tarski and his students. The obtained theory is complete and decidable, which was proved by means of quantifier elimination. In connection with these results see \cite{Tarski51,Tarski59, Tarski67, Gupta,SchwabhauserSzmielewTarski, TarskiGivant}. Tarski's decision procedure had nonelementary complexity. Later decision procedures, having elementary complexity, were given by Monk, Solovay, Collins, Heintz, Roy, Solerno, Renegar, Basu, Pollack and others (\cite{Collins, Monk, Heintz, Renegar, Basu, BasuPollackRoy}). The complexity of these decision procedures is exponential. 

If in Tarski's system of geometry we replace the only axiom schema with a second-order axiom we obtain an axiomatic system for full (non-elementary) $n$-dimensional Euclidean geometry for $n\geq 2$ (see \cite{Tarski59, Tarski67}). If we remove the dimension axiom, then we get a complete axiomatization of the dimension-free Euclidean geometry (\cite{Scott}).

Beth and Tarski (\cite{BethTarski}) proved that no binary relation can be sufficient for the elementary $n$-dimensional geometry for every $n \geq 2$. Still, Beth and Tarski showed in \cite{BethTarski} that the ternary relation $\mathbf{E}$, where $E(abc)$ means that the points $a$, $b$ and $c$ form an equilateral triangle (or $a$, $b$ and $c$ coincide), can be taken as the only primitive for every dimension $n\geq 3$ but it is not sufficient for dimension $1$ or $2$.

Szczerba and Tarski (\cite{SzczerbaTarski65,SzczerbaTarski79}) gave an axiomatization of the affine fragment of the elementary plane Euclidean geometry - the fragment based on betweenness relation alone.
	
Tarski used in his system for the elementary geometry only the primitive concept of point, and the two primitive relations betweenness and equidistance - the same primitive relations as Veblen. Another approach is the relations to be on lines instead of points. Schwabh\"{a}user and Szczerba (\cite{SchwabhauserSzczerba}) studied relations on lines which can be taken as primitives for the elementary Euclidean geometry. They proved that the binary relation of \textit{perpendicularity} $\mathbf{O}$ is sufficient for all dimensions higher than three; for dimension three perpendicularity together with the binary relation of intersection of lines suffice.  Schwabh\"{a}user and Szczerba  considered also the ternary relation of \textit{co-punctuality} $\bm{\rho}$, meaning that three lines intersect at one point.
They showed that $O$ together with $\rho$ may be used as a system of primitive relations for elementary plane Euclidean geometry. The paper \cite{BalbianiTinchev} studies the first-order theory of the fragment of elementary plane Euclidean geometry based on the binary relations \textit{parallelism} ($\mathbf{P}$) and \textit{convergence} ($\mathbf{C}$) of lines. The predicates $P$ and $C$ are definable by $O$ and the converse is not true.

In this paper we give a complete axiomatization for the fragment of elementary plane Euclidean geometry based on perpendicularity alone. We show that this theory is not finitely axiomatizable, it is decidable and the complexity is PSPACE-complete. Moreover the theory is $\omega$-categorical and not categorical in every uncountable cardinality.

\section{A complete axiomatization based on perpendicularity alone}
We consider first-order language $\bm{\mathcal{L}}$ with predicate symbols $O$ (meaning perpendicularity) and $=$, without constants and functional symbols. We consider the theory $\mathbf{SAPP}$, containing the following formulas:

\vspace{5 mm}
\noindent
$\mathbf{\lambda_{1n}}:\ \forall y_1\ldots\forall y_n\forall s(O(s,y_1)\wedge\ldots \wedge O(s,y_n)\rightarrow \exists t(t\neq y_1\wedge\ldots\wedge t\neq y_n\wedge O(s,t))),\ n\geq 1$

\vspace{2mm}
\noindent
$\mathbf{\lambda_{2n}}:\ \forall y_1\ldots\forall y_n\exists s(\neg O(s,y_1)\wedge\ldots\wedge\neg O(s,y_n)),\ n\geq 2$

\vspace{2mm}
\noindent
$\mathbf{\lambda_3}:\ \forall x\neg O(x,x)$

\vspace{2mm}
\noindent
$\mathbf{\lambda_4}:\ \forall x\forall y(O(x,y)\rightarrow O(y,x))$

\vspace{2mm}
\noindent
$\mathbf{\lambda_5}:\ \forall x\exists y O(x,y)$

\vspace{2mm}
\noindent
$\mathbf{\lambda_6}:\ \forall x\forall y\forall z\forall t(O(x,z)\wedge O(y,z)\wedge O(x,t)\rightarrow O(y,t))$

\begin{remark}
	We denote by $\mathcal{F}^2_{\mathbb{R}}$ the structure for the language $\mathcal{L}$, having universe the set of all lines in Euclidean plane. Clearly $\mathcal{F}^2_{\mathbb{R}}\models SAPP$. 
\end{remark}

\begin{remark}
	We consider the binary predicate symbols $P$ (meaning that two lines are parallel), $C$ (meaning that two lines intersect) and the ternary predicate symbol $\rho$ (three lines intersect at one point). We denote by $\mathcal{F}^2_{\mathbb{R},PCO\rho}$ the structure for the language $\langle\ ;\ ;P,C,O,\rho,=\rangle$, having universe the set of all lines in Euclidean plane. Clearly $\mathcal{F}^2_{\mathbb{R},PCO\rho}\models SAPP$. 
\end{remark}

\begin{remark}
	We will denote by $A$, $B$, $C\ldots$ the universes of the structures $\mathcal{A}$, $\mathcal{B}$, $\mathcal{C}\ldots$ 
\end{remark}

\begin{remark}	
	Let $\varphi$ be a first-order formula with free variables $x_1,\ldots,x_n$, $\mathcal{A}$ be a structure for the language of $\varphi$ and $a_1,\ldots,a_n\in A$. By $\mathcal{A}\models\varphi[a_1,\ldots,a_n]$ we denote that $\varphi$ is true in $\mathcal{A}$ under valuation, assigning $a_1,\ldots,a_n$ to $x_1,\ldots,x_n$.
\end{remark}

\begin{remark}	
	We will denote $a_1,\ldots,a_n$ by $\overline{a}$ and $f(a_1),\ldots,f(a_n)$ by $\overline{f(a)}$. 
\end{remark}

\begin{proposition}
	Let $\mathcal{A}$ be a model of $SAPP$. We consider the relation $\mathbf{R_1}$, defined in the following way:\[xR_1 y\stackrel{def}{\Longleftrightarrow} \textmd{for every }z,\ \neg O(x,z)\textmd{ or }O(y,z)\] (Intuitively $xR_1 y$ means "$x$ does not intersect $y$".)
	
	$R_1$ is an equivalence relation which divides $A$ into infinitely many infinite equivalence classes.
\end{proposition}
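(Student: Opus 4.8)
The plan is to treat the three properties of an equivalence relation separately, then analyse the class structure induced by $R_1$. Reflexivity and transitivity require no axioms at all: the defining clause "for every $z$, $\neg O(x,z)$ or $O(y,z)$" is just the implication $O(x,z)\rightarrow O(y,z)$ quantified over $z$, so $xR_1x$ is a tautology and $xR_1y$ together with $yR_1z$ chains to $xR_1z$ by transitivity of implication. The only substantial point is symmetry. To prove $xR_1y\Rightarrow yR_1x$ I would first use $\lambda_5$ to fix a witness $z$ with $O(x,z)$; the hypothesis $xR_1y$ then yields $O(y,z)$. Given any $w$ with $O(y,w)$, I would apply $\lambda_6$ under the instantiation $(x,y,z,t):=(y,x,z,w)$, whose premises $O(y,z)\wedge O(x,z)\wedge O(y,w)$ are all in hand, to conclude $O(x,w)$; this gives $yR_1x$. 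So $\lambda_5$ and $\lambda_6$ are exactly what symmetry costs.

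The heart of the argument is a single structural lemma: for every $x$ the set $\{z:O(x,z)\}$ of perpendiculars of $x$ is precisely one $R_1$-class, and conversely every class has this form. That the set is closed under $R_1$ and contained in a single class uses $\lambda_4$ together with two instances of $\lambda_6$ — I would show that any two perpendiculars $z,z'$ of $x$ satisfy $zR_1z'$ by feeding $O(z,x)$, $O(z',x)$ and an arbitrary $O(z,w)$ into $\lambda_6$ (instantiation $(z,z',x,w)$, concluding $O(z',w)$), and symmetrically. That every class is realised this way follows from $\lambda_5$: given $x$, pick $z$ with $O(x,z)$, so that $x\in\{w:O(z,w)\}$ and hence $[x]_{R_1}=\{w:O(z,w)\}$. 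I would also record, via $\lambda_6$ instantiated to force $O(x',x')$ and then contradicting $\lambda_3$, that distinct members of one class are never perpendicular; consequently $O$ descends to a fixed-point-free symmetric matching on the set of classes, so there are at least two classes to begin with.

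With this lemma the two cardinality claims fall out. Every class is infinite because it equals $\{w:O(z,w)\}$ for a suitable $z$, and $\lambda_5$ gives one perpendicular of $z$ while the schema $\lambda_{1n}$ (for $n\geq 1$) repeatedly produces a fresh one distinct from all those already listed, so the perpendiculars of $z$ form an infinite set. For infinitely many classes I would argue by contradiction: if the classes were exactly $[x_1]_{R_1},\ldots,[x_k]_{R_1}$ with $k\geq 2$, apply $\lambda_{2k}$ to obtain $s$ with $\neg O(s,x_1)\wedge\cdots\wedge\neg O(s,x_k)$; but $\lambda_5$ furnishes some $z$ with $O(s,z)$, this $z$ lies in some $[x_i]_{R_1}$, and then $zR_1x_i$ together with $O(s,z)$ and $\lambda_4$ force $O(s,x_i)$, contradicting the choice of $s$.

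I expect the main obstacle to be book-keeping rather than any deep idea: choosing the correct variable substitutions into $\lambda_6$ for symmetry and for the "perpendiculars form one class" lemma, and checking that $\lambda_{2n}$ is applicable (which needs $k\geq 2$, guaranteed by the fixed-point-free matching). Once the lemma identifying classes with pencils of perpendiculars is in place, both the "each class is infinite" and the "infinitely many classes" statements are short.
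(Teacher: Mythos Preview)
Your argument is correct and follows essentially the same route as the paper: $\lambda_5$ together with $\lambda_6$ for symmetry, $\lambda_{2n}$ together with $\lambda_5$ for infinitely many classes, and $\lambda_{1n}$ for each class being infinite. Your explicit lemma that every $R_1$-class coincides with $\{z:O(x,z)\}$ for a suitable $x$ is a tidier packaging of what the paper establishes inline (and later records as the relation $R_2$), and your attention to the side condition $k\geq 2$ needed for $\lambda_{2k}$ patches a point the paper leaves implicit.
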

\begin{proof}
	Obviously $R_1$ is reflexive.
	
	Let $x,y\in A$ and $xR_1 y$. We will prove $yR_1 x$. Let $z\in A$. We must prove that $\neg O(y,z)$ or $O(x,z)$. Let $O(y,z)$. We will show that $O(x,z)$. We have $\mathcal{A}\models\lambda_5$ and hence - there is $t\in A$ such that $O(x,t)$. From here and $xR_1 y$ we get that $O(y,t)$. From $O(y,t)$, $O(x,t)$, $O(y,z)$ and $\mathcal{A}\models\lambda_6$ we obtain that $O(x,z)$. Consequently $R_1$ is symmetric.
	
	It can be easily verified that $R_1$ is transitive.
	
	Suppose for the sake of contradiction that there are finitely many equivalence classes - $[x_1],\ldots, [x_n]$. Using that $\mathcal{A}\models\lambda_{2n}$ and $\mathcal{A}\models\lambda_5$, we get that there are $s,z\in A$ such that $\neg O(s,x_1),\ldots,\neg O(s,x_n)$, $O(s,z)$. Clearly $z\overline{R_1}x_1$, $z\overline{R_1}x_2,\ldots, z\overline{R_1}x_n$ and hence $[z]\neq [x_1],\ldots,[x_n]$ - a contradiction. Consequently there are infinitely many equivalence classes.
	
	Let $[x]$ be an arbitrary equivalence class. Suppose for the sake of contradiction that $[x]$ is finite, i.e. $[x]=\{y_1,\ldots,y_n\}$ for some $y_1,\ldots,y_n\in A$. From $\mathcal{A}\models\lambda_5$ we obtain that there is $y$ such that $O(y_1,y)$. Let $i\in \{2,\ldots,n\}$. From $y_1 R_1 y_i$ and $O(y_1,y)$ we conclude that $O(y_i,y)$. Using $\mathcal{A}\models\lambda_{1n}$, we get that there is $t\in A$ such that $t\neq y_1,\ldots,t\neq y_n$, $O(y,t)$. We will prove that $y_1 R_1 t$. Let $z\in A$. It suffices to prove that $O(y_1,z)$ implies $O(t,z)$. From $O(y_1,y)$, $O(t,y)$ and $\mathcal{A}\models\lambda_6$ we conclude that $O(t,z)$. Consequently $y_1 R_1 t$, i.e. $t\in [x]$, but we proved that $t\neq y_1,\ldots,y_n$ - a contradiction. Consequently $[x]$ is infinite. $\Box$
\end{proof}
 
\begin{definition}
	Let $\mathcal{A}$ be a model of $SAPP$, $R_1$ be the relation from the previous proposition. We consider the equivalence classes modulo $R_1$ and the following relation:\[[x]\mathbf{R_2} [y]\stackrel{def}{\Longleftrightarrow}O(x,y)\]
\end{definition}

It can be easily verified that this definition is correct and the following proposition holds:

\begin{proposition}\label{p2}
	Let $\mathcal{A}$ be a model of $SAPP$. Then for every equivalence class modulo $R_1$ $[x]$ there is exactly one equivalence class $[y]$ such that $[x]R_2 [y]$. Moreover $[x]\neq [y]$.
\end{proposition}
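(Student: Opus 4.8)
The plan is to verify the three assertions separately: existence of a class $[y]$ with $[x]R_2[y]$, the inequality $[x]\neq[y]$, and uniqueness of such a class. Throughout I would fix a model $\mathcal{A}\models SAPP$ and an element $x\in A$, and I would freely use that $R_1$ is an equivalence relation (the previous proposition) and that the definition of $R_2$ on classes is correct.

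For existence I would simply invoke $\lambda_5$: since $\mathcal{A}\models\lambda_5$, there is some $y\in A$ with $O(x,y)$, and then $[x]R_2[y]$ holds directly from the definition of $R_2$. This step is immediate.

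For the inequality $[x]\neq[y]$ I would argue by contradiction. Suppose $[x]=[y]$, that is $xR_1 y$, while $O(x,y)$. Unwinding the definition of $R_1$ with the witness $z:=y$ gives $\neg O(x,y)$ or $O(y,y)$; since $O(x,y)$ holds this forces $O(y,y)$, contradicting $\lambda_3$. Hence any class witnessing $[x]R_2[y]$ is necessarily distinct from $[x]$.

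The main work, and the step I expect to be the real obstacle, is uniqueness. Suppose $O(x,y)$ and $O(x,y')$; I must show $[y]=[y']$, i.e. $yR_1 y'$. Spelling this out, I need to prove that $O(y,z)$ implies $O(y',z)$ for every $z$. The plan is to apply the ``transport'' axiom $\lambda_6$ with a carefully chosen instantiation of its four universally quantified variables: after rewriting $O(x,y)$ and $O(x,y')$ as $O(y,x)$ and $O(y',x)$ via the symmetry axiom $\lambda_4$, I would instantiate $\lambda_6$ so that its three hypotheses become $O(y,x)$, $O(y',x)$ and $O(y,z)$, and its conclusion becomes $O(y',z)$. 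Concretely this is the substitution sending the bound tuple $(x,y,z,t)$ of $\lambda_6$ to $(y,y',x,z)$. The delicate point is precisely getting this matching right; once it is in place the implication $O(y,z)\Rightarrow O(y',z)$ follows in one line, and since $R_1$ is symmetric this yields $yR_1 y'$ and hence $[y]=[y']$, completing the uniqueness argument.
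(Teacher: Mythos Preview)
Your proof is correct; the paper itself omits the argument, merely asserting that the proposition is easily verified, and your write-up supplies exactly the routine details one expects (existence via $\lambda_5$, distinctness via $\lambda_3$, uniqueness via $\lambda_4$ and the transport axiom $\lambda_6$ with the substitution you indicate). One minor remark: the final appeal to the symmetry of $R_1$ is unnecessary, since the implication $O(y,z)\Rightarrow O(y',z)$ you derive is already the definitional content of $yR_1 y'$.
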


\begin{proposition}\label{p3}
	Let $\mathcal{A}$ be a countable model of $SAPP$, $\mathcal{B}$ be a model of $SAPP$. Then $\mathcal{A}$ is elementary embedded in $\mathcal{B}$.
\end{proposition}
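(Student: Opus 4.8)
The plan is to exhibit a back-and-forth system of finite partial isomorphisms between $\mathcal{A}$ and $\mathcal{B}$ and then, using the countability of $\mathcal{A}$, to run only the \emph{forth} direction to produce a total elementary map. First I would record the shape of an arbitrary model. By the proposition showing that $R_1$ partitions the universe into infinitely many infinite classes (the ``directions''), together with Proposition \ref{p2}, the relation $R_2$ pairs every class with a unique, distinct class; by $\lambda_4$ this pairing is symmetric, hence an involution on the set of classes. Since the definition of $R_2$ is correct, $O(x,y)$ depends only on the classes $[x],[y]$ and holds exactly when they are $R_2$-paired, while $\neg O(x,x)$ holds and elements of one class are never $O$-related to each other. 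Consequently the quantifier-free $(=,O)$-type of a tuple is determined by three data: the equality pattern, the partition into $R_1$-classes, and which of the occurring classes are $R_2$-paired.

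Accordingly I would let $I$ be the set of finite partial maps $p\colon A\rightharpoonup B$ that preserve and reflect equality, the relation $R_1$, and the relation $O$. Including $R_1$ explicitly is important: $R_1$ is given by a universally quantified formula, so a finite map preserving only the atomic relations $=$ and $O$ need not preserve ``same direction'', and this is the single point where naive atomic preservation is insufficient. The empty map lies in $I$, so $I\neq\emptyset$.

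Next I would verify the forth property; the back property is symmetric, since $\mathcal{A}$ and $\mathcal{B}$ are both models of $SAPP$ and the facts invoked hold in either. Given $p\in I$ with occurring directions $[a_1],\dots,[a_k]$ and a new $a\in A$, there are three cases. If $[a]=[a_i]$ for some $i$, I send $a$ to an element of $[p(a_i)]$ distinct from the finitely many current images; such an element exists because every class is infinite, and it has the correct $O$-pattern since $O$ is constant on classes. If $[a]$ is new but its $R_2$-partner equals some occurring $[a_m]$, I send $a$ into the unique $R_2$-partner of $[p(a_m)]$ (which exists by Proposition \ref{p2}); using that $R_2$ is an involution one checks that this target class is distinct from all image classes and that the chosen element is $O$-related to exactly the right images. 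If both $[a]$ and its $R_2$-partner are new, I choose for $p(a)$ an element of some class of $\mathcal{B}$ avoiding the finitely many image classes and their partners, which is possible as there are infinitely many classes; such an element is $O$-related to none of the current images, matching $a$. In every case the extension remains in $I$.

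Finally, by the standard back-and-forth (Karp) theorem, a nonempty back-and-forth system consists of partial elementary maps, so every $p\in I$ preserves all first-order formulas. Enumerating $A=\{a_0,a_1,\dots\}$ (here the countability of $\mathcal{A}$ is used) and repeatedly applying the forth property, I obtain a chain $p_0\subseteq p_1\subseteq\cdots$ in $I$ with $a_n\in\operatorname{dom}(p_n)$; the union $f=\bigcup_n p_n$ is a total map $A\to B$ each of whose finite restrictions is partial elementary, whence $f$ is an elementary embedding. I expect the main obstacle to be the forth/back case analysis, and within it the middle case: routing a new element into the uniquely determined perpendicular class and verifying, via the involutivity of $R_2$ and Proposition \ref{p2}, that all perpendicularity relations with the existing images come out correctly.
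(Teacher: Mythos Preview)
Your argument is correct, but it follows a genuinely different route from the paper's. The paper first builds a single explicit injection $f\colon A\to B$ by matching the countably many $R_2$-paired classes of $\mathcal{A}$ with a choice of $R_2$-paired classes in $\mathcal{B}$ and injecting each class into its counterpart; it then proves by induction on formulas that this fixed $f$ is elementary. The delicate step there is the backward direction of the existential case: given a witness $b\in B$, one must manufacture a witness $a\in A$. The paper handles this in two subcases, using an explicit automorphism of $\mathcal{B}$ swapping $b$ with some $f(a)$ when $b$ lies in an image class, and an Ehrenfeucht--Fra\"iss\'e game between two expansions of $\mathcal{B}$ when $b$ lies in a fresh class.

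Your approach replaces this ad hoc analysis by the standard back-and-forth machinery: you isolate the right family $I$ (the crucial move being to require preservation of $R_1$, not just of the atomic relations), verify forth and back by the same three-case analysis that in the paper is hidden inside the existential step, invoke Karp's theorem to get that every $p\in I$ is partial elementary, and then use the countability of $\mathcal{A}$ only at the end to thread the forth extensions into a total map. This is cleaner and more modular: the structural content is confined to the forth/back verification, and elementarity comes for free. The paper's version is more self-contained (no appeal to Karp), but its Ehrenfeucht--Fra\"iss\'e argument in Case~2.2 is essentially reproving, for this particular situation, what the back-and-forth theorem gives you in one stroke.
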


\begin{proof}
	Since $\mathcal{A}$ is a countable model of $SAPP$, $A$ has countably many equivalence classes. Let these equivalence classes be $[a_1]$, $[a_2],\ldots,[a_n],\ldots,[a_1']$, $[a_2'],\ldots,[a_n'],\ldots$, all they being different and $[a_1]R_2 [a_1']$, $[a_2]R_2 [a_2'],\ldots, [a_n] R_2 [a_n'],\\\ldots$. There exist countably many equivalence classes of $B$ $[b_1]$, $[b_2],\ldots,[b_n],\ldots;\ [b_1']$, $[b_2'],\ldots,[b_n'],\ldots$ such that all they are different and $[b_1]R_2 [b_1']$, $[b_2] R_2 [b_2'],\ldots,[b_n]R_2$\\ $[b_n'],\ldots$. For every $n$, $[a_n]$ and $[a_n']$ are countable; $[b_n]$ and $[b_n']$ are infinite. Consequently for every $n$, there are injections $h_n:\ [a_n]\rightarrow [b_n]$ and $h_n':\ [a_n']\rightarrow [b_n']$. 
	
	We define the mapping $f$:
	\[f(a) =\left\{
	\begin{array}{lll}
	h_n(a) & \quad &\textmd{if }a\in [a_n] \textmd{ for some }n\\
	h_n'(a) & \quad &\textmd{if }a\in [a_n'] \textmd{ for some }n 
	\end{array} \right.\]
	
	We will prove that $f$ is an elementary embedding of $\mathcal{A}$ in $\mathcal{B}$. By induction on $\varphi$ we will prove that for every formula $\varphi$, if $\varphi$ has free variables among $x_1,\ldots, x_n$ and $c_1,\ldots,c_n\in A$, then $\mathcal{A}\models\varphi [\overline{c}]\Leftrightarrow \mathcal{B}\models\varphi [\overline{f(c)}]$.\\
	1) The base of induction is trivial.\\
	2) $\varphi$ is $\neg\varphi_1$\\
	The proof is obvious.\\
	3) $\varphi$ is $\varphi_1\wedge\varphi_2$\\
	The proof is obvious.\\
	4) $\varphi$ is $\exists x \varphi_1$\\
	Let the free variables of $\varphi$ be among $x_1,\ldots,x_n$ and $c_1,\ldots, c_n\in A$. It can be easily verified that $\mathcal{A}\models\exists x \varphi_1[\overline{c}]$ implies $\mathcal{B}\models\exists x \varphi_1[\overline{f(c)}]$.
	
	Let $\mathcal{B}\models\exists x\varphi_1[\overline{f(c)}]$. We will prove $\mathcal{A}\models\exists x\varphi_1[\overline{c}]$. There exists $b\in B$ such that $\mathcal{B}\models\varphi_1[b,f(c_1),\ldots,f(c_n)]$\\
	\textbf{Case 1}: $b=f(a)$ for some $a\in A$\\
	From the induction hypothesis and $\mathcal{B}\models\varphi_1[f(a),f(c_1),\ldots,f(c_n)]$ we get that $\mathcal{A}\models\varphi_1[a,c_1,\ldots,c_n]$ and hence $\mathcal{A}\models\exists x\varphi_1[c_1,\ldots,c_n]$.\\
	\textbf{Case 2}: $b\neq f(a)$ for every $a\in A$\\
	\textbf{Case 2.1}: $b\in [b_i]$ or $b\in [b_i']$ for some $i$\\
	Without loss of generality $b\in [b_i]$ for some $i$. There is $b'\in [b_i]$ such that $b'=f(a)$ for some $a\neq c_1,\ldots,c_n$. Since $b$, $b'\in [b_i]$, $\neg O(b,b')$ (we use the definition of $R_2$ and Proposition~\ref{p2}). We define a function $g:\ B\rightarrow B$ in the following way:
	\[g(x) =\left\{
	\begin{array}{lll}
	b' & \quad &\textmd{if }x=b\\
	b & \quad &\textmd{if }x=b'\\
	x & \quad &\textmd{otherwise} 
	\end{array} \right.\]
	
	\noindent
	Obviously $g$ is a bijection. We will prove that $g$ is an automorphism. Let $d_1$, $d_2\in B$. It suffices to prove that $O(d_1,d_2)\Leftrightarrow O(g(d_1),g(d_2))$. We will consider only the case when $d_1\notin \{b,b'\}$, $d_2\in \{b,b'\}$. Without loss of generality $d_2=b$. It suffices to prove $O(d_1, d_2)\Leftrightarrow O(d_1, b')$. We have $bR_1 b'$ and using the definition of $R_1$ and its symmetry, we get that $O(d_1,b)\Leftrightarrow O(d_1,b')$. Consequently $g$ is an automorphism. 
	
	We have $\mathcal{B}\models\varphi_1[b,f(c_1),\ldots,f(c_n)]$. Consequently $\mathcal{B}\models\varphi_1[g(b),g(f(c_1)),\ldots,\\g(f(c_n))]$, i.e. $\mathcal{B}\models\varphi_1[b',f(c_1),\ldots,f(c_n)]$. By the induction hypothesis, $\mathcal{A}\models\varphi_1[a,c_1,\ldots,c_n]$ and hence $\mathcal{A}\models\exists x\varphi_1[c_1,\ldots,c_n]$.\\
	\textbf{Case 2.2}: $b\notin [b_i]$ and $b\notin [b_i']$ for every $i$\\
	Let $k_1,\ldots,k_l\in A$ $(l\leq n)$ be such that for every $i=1,\ldots,l$, $[k_i]R_2 [c_j]$ for some $j\in \{1,\ldots,n\}$. There is $a\in A$ such that $a\notin [c_1],\ldots,[c_n],\ [k_1],\ldots, [k_l]$. We add to the language $\mathcal{L}$ the constants $d$, $d_1,\ldots,d_n$. Interpreting the new constants by $b$, $f(c_1),\ldots,f(c_n)$ or by $f(a)$, $f(c_1),\ldots,f(c_n)$, we obtain two structures for the extended language which we denote by $\mathcal{B}'=(\mathcal{B},b,f(c_1),\ldots,f(c_n))$ and $\mathcal{B}''=(\mathcal{B},f(a),f(c_1),\ldots,f(c_n))$ correspondingly. Let $[m_1]$ be the only equivalence class for which $[b]R_2 [m_1]$, and $[m_2]$ be the only equivalence class for which $[f(a)] R_2 [m_2]$. We consider Ehrenfeucht-Fra\"{\i}ss\`{e}'s game with arbitrary finite length $s$ and the following strategy for the second player: if the first player chooses $b$ $(f(a))$, then the second player chooses $f(a)$ $(b)$. Otherwise, if the first player chooses an element out of $[b]\cup [f(a)]\cup [m_1]\cup [m_2]$, then the second player chooses the same element; if the first player chooses a new element of $[b]$, then the second player chooses a new element of $[f(a)]$, different from $f(a)$ and the converse; if the first player chooses a new element of $[m_1]$, then the second player chooses a new element of $[m_2]$ and the converse; if the first player chooses already chosen in the corresponding structure element $x$ in $[b]\cup [f(a)]\cup [m_1]\cup [m_2]$, then the second player chooses the same element which then was chosen in the other structure. Let $e_1,\ldots,e_s$ and $e_1',\ldots,e_s'$ be correspondingly of $\mathcal{B}'$ and of $\mathcal{B}''$ in the order of their choosing. Let $h=\{\langle e_i,e_i'\rangle:\ i=1,\ldots,s\}\cup \{\langle C^{\mathcal{B}'}, C^{\mathcal{B}''}\rangle:\ C\textmd{ - a constant of the extended language}\}$. Let $\mathcal{B}_1'$ be the substructure of $\mathcal{B}'$, generated by $e_1,\ldots,e_s$, $b$, $f(c_1),\ldots,f(c_n)$; $\mathcal{B}_1''$ be the substructure of $\mathcal{B}''$, generated by $e_1',\ldots,e_s'$, $f(a)$, $f(c_1),\ldots,f(c_n)$. It can be easily verified that $h$ is an isomorphism from $\mathcal{B}_1'$ to $\mathcal{B}_1''$. Consequently for every closed formula $\varphi$ we have $\mathcal{B}'\models\varphi\Leftrightarrow \mathcal{B}''\models\varphi$ and hence $\mathcal{B}'\models\varphi_1(d,d_1,\ldots,d_n)\Leftrightarrow \mathcal{B}''\models\varphi_1(d,d_1,\ldots,d_n)$; so $\mathcal{B}\models\varphi_1[b,f(c_1),\ldots,f(c_n)]\Leftrightarrow \mathcal{B}\models\varphi_1[f(a),f(c_1),\ldots,f(c_n)]$. But we have $\mathcal{B}\models\varphi_1[b,f(c_1),\ldots,f(c_n)]$ and thus $\mathcal{B}\models\varphi_1[f(a),f(c_1),\ldots,f(c_n)]$. From the induction hypothesis, $\mathcal{A}\models\varphi_1[a,c_1,\ldots,c_n]$, i.e. $\mathcal{A}\models\exists x\varphi_1[\overline{c}]$. Consequently $f$ is an elementary embedding of $\mathcal{A}$ in $\mathcal{B}$. $\Box$
\end{proof}

\begin{corollary}\label{cor1}
	The theory $SAPP$ is complete.
\end{corollary}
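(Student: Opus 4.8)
The plan is to obtain completeness as an almost immediate corollary of Proposition~\ref{p3}, since that proposition already carries out the substantive work. Recall that a consistent theory is complete precisely when all of its models are elementarily equivalent; so it suffices to exhibit a single model to which every model of $SAPP$ is elementarily equivalent, and then invoke transitivity of $\equiv$.

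First I would fix such a reference model. The theory $SAPP$ is consistent, since the structure $\mathcal{F}^2_{\mathbb{R}}$ of lines in the Euclidean plane is a model of it, and by the earlier analysis of $R_1$ every model is infinite; hence by the downward L\"owenheim--Skolem theorem (the language $\mathcal{L}$ being finite, in particular countable) there exists a countable model $\mathcal{A}_0\models SAPP$. This $\mathcal{A}_0$ will serve as the fixed yardstick.

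Next, let $\mathcal{B}$ be an arbitrary model of $SAPP$. Applying Proposition~\ref{p3} with $\mathcal{A}=\mathcal{A}_0$ gives an elementary embedding $f\colon \mathcal{A}_0\to\mathcal{B}$. Specializing the defining property of an elementary embedding to sentences (formulas with no free variables) yields $\mathcal{A}_0\models\sigma \Leftrightarrow \mathcal{B}\models\sigma$ for every sentence $\sigma$, i.e.\ $\mathcal{A}_0\equiv\mathcal{B}$. Since $\mathcal{B}$ was arbitrary, every model of $SAPP$ is elementarily equivalent to the single fixed model $\mathcal{A}_0$; by transitivity of $\equiv$ all models of $SAPP$ are pairwise elementarily equivalent. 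Therefore $SAPP$ is complete, its complete extension being exactly $\mathrm{Th}(\mathcal{A}_0)$.

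As for difficulty, essentially all of it has already been spent in proving Proposition~\ref{p3} (in particular the Ehrenfeucht--Fra\"{\i}ss\'e argument of Case 2.2). The only points needing any care in this corollary are routine: verifying that a countable model exists at all (consistency together with downward L\"owenheim--Skolem) and noting that an elementary embedding, read at the level of sentences, is exactly an elementary equivalence. No genuine obstacle remains; the statement is a direct packaging of the preceding proposition.
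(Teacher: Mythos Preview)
Your proof is correct and follows the same route as the paper: produce a countable model of $SAPP$, apply Proposition~\ref{p3} to embed it elementarily into an arbitrary model, and read off elementary equivalence. The only difference is cosmetic---the paper writes down an explicit countable model $\mathcal{F}^2_{\mathbb{Q}}$ (lines through two rational points) and checks the axioms there, whereas you obtain a countable model abstractly via downward L\"owenheim--Skolem from $\mathcal{F}^2_{\mathbb{R}}$.
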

\begin{proof}
	Let $\mathcal{F}^2_{\mathbb{Q}}$ be the structure for the language $\mathcal{L}$ with universe $\{\bm{a}\textmd{ - line in}\\\textmd{Euclidean plane: at least 2 of the points of }\bm{a}\textmd{ are with rational coordinates}\}$. The predicate symbol $O$ is interpreted by perpendicularity. We will prove that $\mathcal{F}^2_{\mathbb{Q}}\models\lambda_5$. Let $x_{1,2}$, $y_{1,2}\in\mathbb{Q}$ and $a$ be the line, determined by the points $(x_1,y_1)$, $(x_2,y_2)$.\\
	\textbf{Case 1}: $a$ does not coincide with and is not parallel to the ordinate axis\\
	Let $b$ be an arbitrary line in Euclidean plane, perpendicular to $a$, and $k$ be the slope of $b$. Let $k_1$ be the slope of $a$. The line $m$ with equation $y=kx$ is perpendicular to $a$. We have $k=-\frac{1}{k_1}=-\frac{x_2-x_1}{y_2-y_1}\in\mathbb{Q}$. At least two of the points of the line $m$ are with rational coordinates - $(0,0)$ and $(1,k)$.\\
	\textbf{Case 2}: $a$ coincides with or is parallel to the ordinate axis\\
	The abscissa axis is in $F^2_{\mathbb{Q}}$. Consequently $\mathcal{F}^2_{\mathbb{Q}}\models\lambda_5$. In a similar way it can be proved that $\mathcal{F}^2_{\mathbb{Q}}\models\lambda_{1n}$ for every $n\geq 1$ and $\mathcal{F}^2_{\mathbb{Q}}\models\lambda_{2n}$ for every $n\geq 2$. Clearly in $\mathcal{F}^2_{\mathbb{Q}}$ are true also $\lambda_3$, $\lambda_4$ and $\lambda_6$. Consequently $\mathcal{F}^2_{\mathbb{Q}}\models SAPP$. 
	
	Let $\mathcal{A}$ and $\mathcal{B}$ be arbitrary models of $SAPP$. Since $\mathcal{F}^2_{\mathbb{Q}}$ is a countable model of $SAPP$, $\mathcal{F}^2_{\mathbb{Q}}$ is elementary embedded in $\mathcal{A}$ and in $\mathcal{B}$ (Proposition~\ref{p3}). Consequently $\mathcal{F}^2_{\mathbb{Q}}$ is elementary equivalent to $\mathcal{A}$ and to $\mathcal{B}$ and hence $\mathcal{A}$ and $\mathcal{B}$ are elementary equivalent. Consequently $SAPP$ is complete. $\Box$
\end{proof}

\begin{proposition}
	\textbf{(i)} $=$ is not definable by $O$ in every model of $SAPP$;\\
	\textbf{(ii)} $O$ is not definable by $=$ in every model of $SAPP$;\\
	\textbf{(iii)} the ternary predicate $\rho(a,b,c)$ is not definable in $\mathcal{F}^2_{\mathbb{R},PCO\rho}$ by $O$ and $=$;\\
	\textbf{(iv)} the binary predicate $P(a,b)$ is definable in $\mathcal{F}^2_{\mathbb{R},PCO\rho}$ by $O$ and $=$;\\
	\textbf{(v)} the binary predicate $C(a,b)$ is definable in $\mathcal{F}^2_{\mathbb{R},PCO\rho}$ by $O$ and $=$;\\
	\textbf{(vi)} $O$ is not definable in $\mathcal{F}^2_{\mathbb{R},PCO\rho}$ by $P$, $C$ and $=$.
\end{proposition}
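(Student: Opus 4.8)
The plan is to split the six clauses by method: the positive parts (iv) and (v) are handled by writing down explicit defining formulas; the non-definability parts (ii), (iii), (vi) are handled through the standard necessary condition that a parameter-free definable relation must be invariant under every automorphism of the relevant reduct; and part (i), which resists the automorphism method, is handled by a quotient/homomorphism argument. Throughout I use the $O$-definable relation $R_1$ with $x R_1 y \Leftrightarrow \forall z(O(x,z)\to O(y,z))$, which by the first proposition of this section is the symmetric ``$x$ does not intersect $y$'' relation, i.e.\ parallel-or-equal in $\mathcal{F}^2_{\mathbb{R}}$, and the fact that perpendicularity in the plane depends only on direction.

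For (iv) I would set $P(x,y):\equiv x\neq y\wedge \forall z(O(x,z)\to O(y,z))$ and verify in $\mathcal{F}^2_{\mathbb{R},PCO\rho}$ that this says exactly ``$x,y$ are distinct parallel lines'' (dropping the conjunct $x\neq y$ if the reflexive convention for $P$ is used). For (v) I would set $C(x,y):\equiv \exists z(O(x,z)\wedge\neg O(y,z))$, i.e.\ $\neg(xR_1y)$, and note that two lines fail to be parallel-or-equal precisely when they meet in a single point. Both are routine checks once one observes that $R_1$ records exactly the partition of lines into directions.

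For the automorphism clauses: in (ii), every bijection of the universe is an automorphism of the pure-equality reduct, so one picks any bijection sending an $O$-pair (which exists by $\lambda_5$) to a non-$O$-pair, e.g.\ two distinct parallel lines (these exist since $R_1$-classes are infinite); it fails to preserve $O$, so $O$ is not $\{=\}$-definable. In (iii), the transposition of two distinct parallel lines is an automorphism of the $\{O,=\}$-reduct of $\mathcal{F}^2_{\mathbb{R},PCO\rho}$ -- this is precisely the map $g$ built in Case~2.1 of the proof of Proposition~\ref{p3}. Taking three pairwise non-parallel concurrent lines $a,b,c$ (so $\rho(a,b,c)$) and a parallel translate $a'$ of $a$ missing their common point, the transposition $(a\ a')$ fixes $b,c$ and sends the triple to $(a',b,c)$, for which $\rho$ fails; hence no $\{O,=\}$-formula defines $\rho$. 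In (vi), $P$ and $C$ record only the partition of lines into directions, so the automorphisms of the $\{P,C,=\}$-reduct are exactly the permutations of lines that permute the direction-classes; choosing one that does not respect the fixed-point-free pairing of a class with its perpendicular class (Proposition~\ref{p2}) gives a $\{P,C,=\}$-automorphism that violates $O$, so $O$ is not $\{P,C,=\}$-definable.

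The main obstacle is (i), where automorphisms are useless: every bijection preserves equality, so invariance can never separate the diagonal from a proper superset. Instead I would use that formulas \emph{without the equality symbol} are preserved by surjective strong homomorphisms. Let $\pi:\mathcal{F}^2_{\mathbb{R}}\to\mathcal{B}$ be the quotient collapsing each $R_1$-class to a point, with $\mathcal{B}$ carrying perpendicularity of directions; since $O$ depends only on direction, $\pi$ is surjective and $O(x,y)\Leftrightarrow O^{\mathcal{B}}(\pi x,\pi y)$. The crux is the preservation lemma: for every equality-free formula $\varphi$ and tuple $\overline{a}$, one has $\mathcal{F}^2_{\mathbb{R}}\models\varphi[\overline{a}]\Leftrightarrow\mathcal{B}\models\varphi[\overline{\pi(a)}]$, proved by induction, the atomic step using the strong-homomorphism property and the existential step using surjectivity of $\pi$. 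Granting this, if some equality-free $\varphi(x,y)$ defined $=$, pick distinct parallel $a,a'$: then $\mathcal{F}^2_{\mathbb{R}}\models\varphi[a,a]$ but $\mathcal{F}^2_{\mathbb{R}}\models\neg\varphi[a,a']$, whereas $\pi(a)=\pi(a')$ forces the two images $\varphi[\pi(a),\pi(a)]$ and $\varphi[\pi(a),\pi(a')]$ to share a truth value in $\mathcal{B}$ -- a contradiction. This refutes $\{O\}$-definability of $=$ in $\mathcal{F}^2_{\mathbb{R}}$, and the identical argument goes through in any model (whose $R_1$-classes are infinite by the first proposition), yielding (i). I expect the real work to be in isolating and correctly proving this preservation lemma, since it is exactly what replaces the automorphism argument available for every other clause.
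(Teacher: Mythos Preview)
Your proposal is correct and tracks the paper's proof closely: parts (ii), (iii), (v), (vi) match essentially verbatim, and your formula for (iv) is an equivalent variant of the paper's $x\neq y\wedge\exists z(O(x,z)\wedge O(y,z))$. The only genuine divergence is in (i): the paper stays inside the given model, taking $f:A\to A$ to be the identity off a single $R_1$-class $[a]$ and a non-injective surjection $[a]\to[a]$ on that class, then proving the same preservation lemma for $O$-only formulas with respect to this endomorphism; you instead pass to the quotient structure of directions via $\pi$. Both arguments rest on the identical observation (equality-free formulas are preserved by surjective strong homomorphisms, since $O$ depends only on $R_1$-class), so the difference is cosmetic---your quotient is perhaps conceptually cleaner, while the paper's endomorphism avoids introducing a second structure.
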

\begin{proof}
	
	\textbf{(i)} Let $\mathcal{A}\models SAPP$. Suppose for the sake of contradiction that there is a formula $\varphi(x,y)$ in which occurs only the predicate symbol $O$, such that $\mathcal{A}\models\forall x\forall y(x=y\leftrightarrow \varphi(x,y))$. Let $a\in A$ and $g:\ [a]\rightarrow [a]$ be a surjection  which is not an injection. We define $f:\ A\rightarrow A$ in the following way:
	\[f(b) =\left\{
	\begin{array}{lll}
	b & \quad &\textmd{if }b\notin [a]\\
	g(b) & \quad &\textmd{otherwise }
	\end{array} \right.\]
	
	By induction on $\psi$ it can be easily proved that for every formula $\psi$, if in $\psi$ do not occur other predicate symbols except $O$, $\psi$ has free variables among $x_1,\ldots, x_n$ and $a_1,\ldots,a_n\in A$, then $\mathcal{A}\models\psi[\overline{a}]$ iff $\mathcal{A}\models\psi[\overline{f(a)}]$. Consequently for all $x'$, $y'\in A$, $\mathcal{A}\models\varphi[x',y']$ iff $\mathcal{A}\models\varphi[f(x'),f(y')]$.
	
	Since $g$ is not an injection, there are $b_1$, $b_2\in [a]$ such that $b_1\neq b_2$ and $g(b_1)=g(b_2)$. Consequently $f(b_1)=f(b_2)$. Clearly we have the following equivalences: $b_1=b_2$ iff $\mathcal{A}\models\varphi[b_1,b_2]$ iff $\mathcal{A}\models\varphi[f(b_1),f(b_2)]$ iff $f(b_1)=f(b_2)$. Consequently $b_1=b_2$ - a contradiction.
	
	\textbf{(ii)} Let $\mathcal{A}\models SAPP$. Suppose for the sake of contradiction that there is a formula $\varphi(x,y)$ in which occurs only the predicate symbol $=$, such that $\mathcal{A}\models\forall x\forall y(O(x,y)\leftrightarrow \varphi(x,y))$.
	
	We consider $\mathcal{F}^2_{\mathbb{Q}}$. Let $a$ be a line in the plane of which at least two of the points have rational coordinates, and $[b]$ be the only equivalence class such that $[a]R_2[b]$. Let $[c]\neq [a]$ and $[c]\neq [b]$. Let $[a]=\{a_0,a_1,\ldots,a_n,\ldots\}$ and $[c]=\{c_0,c_1,\ldots,c_n,\ldots\}$. We define a function $f:\ F^2_{\mathbb{Q}}\rightarrow F^2_{\mathbb{Q}}$ in the following way:
	\[f(x) =\left\{
	\begin{array}{lll}
	x & \quad &\textmd{if }x\notin[a]\textmd{ and }x\notin[c]\\
	c_i & \quad &\textmd{if }x=a_i\textmd{ for some }i\\
	a_i & \quad &\textmd{if }x=c_i\textmd{ for some }i
	\end{array} \right.\]
	
	By induction on $\psi$ it can be easily verified that for every formula $\psi$, if in $\psi$ do not occur other predicate symbols except $=$, $\psi$ has free variables among $x_1,\ldots,x_n$ and $a_1,\ldots,a_n\in F^2_{\mathbb{Q}}$, then $\mathcal{F}^2_{\mathbb{Q}}\models\psi[\overline{a}]$ iff $\mathcal{F}^2_{\mathbb{Q}}\models\psi[\overline{f(a)}]$.
	
	$\mathcal{F}^2_{\mathbb{Q}}$ is countable and $\mathcal{F}^2_{\mathbb{Q}}\models SAPP$ (it was proved in Corollary~\ref{cor1}); $\mathcal{A}\models SAPP$; so by Proposition~\ref{p3} we obtain that $\mathcal{F}^2_{\mathbb{Q}}$ is elementary embedded in $\mathcal{A}$. Consequently $\mathcal{F}^2_{\mathbb{Q}}\models\forall x\forall y(O(x,y)\leftrightarrow \varphi(x,y))$. We have $[a]R_2[b]$, so $O(a,b)$. We have also $f(a)\in[c]$ and $[c]\neq[a]$, so $\neg O(f(a),b)$, i.e. $\neg O(f(a),f(b))$. Clearly we have the following equivalences: $O(a,b)$ iff $\mathcal{F}^2_{\mathbb{Q}}\models\varphi[a,b]$ iff $\mathcal{F}^2_{\mathbb{Q}}\models\varphi[f(a),f(b)]$ iff $O(f(a),f(b))$. Consequently $O(f(a),f(b))$ - a contradiction.
	
	\textbf{(iii)} Suppose for the sake of contradiction that there is a formula $\varphi(x,y,z)$ in $\mathcal{L}$ such that $\mathcal{F}^2_{\mathbb{R},PCO\rho}\models\forall x\forall y\forall z(\rho(x,y,z)\leftrightarrow\varphi(x,y,z))$. Let $a$, $b$ and $c$ be three lines in the plane which intersect at one point. Let $a'$ be parallel to $a$. Clearly $\mathcal{F}^2_{\mathbb{R},PCO\rho}\models\varphi[a,b,c]$ and $\mathcal{F}^2_{\mathbb{R},PCO\rho}\not\models\varphi[a',b,c]$. We define $h:\ F^2_{\mathbb{R}}\rightarrow F^2_{\mathbb{R}}$ in the following way:
	\[h(x) =\left\{
	\begin{array}{lll}
	x & \quad &\textmd{if }x\neq a,a'\\
	a' & \quad &\textmd{if }x=a\\
	a & \quad &\textmd{if }x=a'
	\end{array} \right.\]
	It can be easily verified that $h$ is a bijection and preserves the predicates $O$ and $=$. Consequently $\mathcal{F}^2_{\mathbb{R},PCO\rho}\models\varphi[a,b,c]$ iff $\mathcal{F}^2_{\mathbb{R},PCO\rho}\models\varphi[h(a),h(b),h(c)]$. Thus $\mathcal{F}^2_{\mathbb{R},PCO\rho}\models\varphi[h(a),h(b),h(c)]$, i.e. $\mathcal{F}^2_{\mathbb{R},PCO\rho}\models\varphi[a',b,c]$ - a contradiction.
	
	\textbf{(iv)} We have $\mathcal{F}^2_{\mathbb{R},PCO\rho}\models\forall x\forall y(P(x,y)\leftrightarrow(x\neq y\wedge\exists z(O(x,z)\wedge O(y,z))))$. 
	
	\textbf{(v)} We have $\mathcal{F}^2_{\mathbb{R},PCO\rho}\models\forall x\forall y(C(x,y)\leftrightarrow\exists z(O(x,z)\wedge\neg O(y,z)))$. 
	
	\textbf{(vi)} The proof is similar as for (ii). $\Box$
\end{proof}

\section{Decidability and complexity of the first-order theory}

\begin{proposition}
	The problem if a closed formula in $\mathcal{L}$ logically follows from $SAPP$ is PSPACE-complete.
\end{proposition}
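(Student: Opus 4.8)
The plan is to establish the two bounds separately. Since $SAPP$ is complete (Corollary~\ref{cor1}), a closed formula $\varphi$ logically follows from $SAPP$ if and only if it is true in the countable model $\mathcal{F}^2_{\mathbb{Q}}$; so the decision problem reduces to model-checking $\varphi$ in one fixed countable model whose structure is completely described by Propositions~\ref{p2} and~\ref{p3}: a countable family of infinite $R_1$-classes, partitioned by $R_2$ into unordered pairs $\{[x],[y]\}$ of mutually perpendicular classes, with $O$ holding exactly between lines of paired classes.

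For the \textbf{upper bound} I would give an alternating polynomial-time model-checking procedure, which yields $PSPACE$ membership via $APTIME=PSPACE$. The key observation is that the truth of an atomic formula $O(x_i,x_j)$ or $x_i=x_j$ under an assignment $a_1,\dots,a_k$ depends only on the \emph{type} of the tuple: the equality pattern, the partition of the $a_i$ into $R_1$-classes, and the induced $R_2$-pairing among the occupied classes (recording, for each occupied class, whether its unique $R_2$-partner is also occupied). This type is storable in space polynomial in $k$. To evaluate $\exists x\,\psi$ (resp.\ $\forall x\,\psi$) the procedure branches existentially (resp.\ universally) over the finitely many ways a new element $x$ can extend the current type: $x$ equals some $a_i$; $x$ is a fresh element of an already occupied class; $x$ lies in the hitherto unoccupied $R_2$-partner of an occupied class; or $x$ lies in a fresh pair. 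Because every $R_1$-class is infinite and there are infinitely many pairs, each such extension is actually realised in the model, so the branching is sound and complete. The recursion depth equals the quantifier depth of $\varphi$ and each step runs in polynomial time, giving an $APTIME$ algorithm.

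For the \textbf{lower bound} I would reduce the $PSPACE$-complete problem $QBF$. Given $Q_1 x_1\cdots Q_n x_n\,\phi$ with $\phi$ a propositional matrix, I fix two mutually perpendicular reference lines by a prefix $\exists u\exists v\,(O(u,v)\wedge\cdots)$ and encode a Boolean value of $x_i$ by the class of a line $w_i$: the constraint $O(w_i,u)\vee O(w_i,v)$ forces $w_i$ into one of the two reference classes, with $O(w_i,u)$ reading as \emph{true} and $O(w_i,v)$ as \emph{false}. Each quantifier $\exists x_i$ (resp.\ $\forall x_i$) is translated to $\exists w_i$ (resp.\ $\forall w_i$) relativised by this constraint, and each literal $x_i$ (resp.\ $\neg x_i$) is replaced by $O(w_i,u)$ (resp.\ $O(w_i,v)$), with $\wedge,\vee$ preserved. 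This translation is computable in polynomial time.

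The correctness of the reduction, and the realisability claim in the upper bound, both rest on the homogeneity already exploited in Proposition~\ref{p3}: all elements of a fixed $R_1$-class are interchangeable by an automorphism, and any two perpendicular pairs $(u,v)$ look alike. Hence, relativised to the two reference classes, the first-order quantifier over all lines faithfully simulates the Boolean quantifier over the two truth values, and the value of the translated matrix depends only on the class chosen for each $w_i$; so the formula is true in the model iff the $QBF$ is true. I expect the main obstacle to be the careful formalisation of the upper bound --- isolating the finite, polynomially describable set of types, proving that every consistent one-element extension is realised (using $\lambda_5$, Proposition~\ref{p2} and the infinitude of the classes), and verifying that atomic truth is type-determined --- rather than the hardness reduction, which is routine once the Boolean encoding via perpendicularity is in place.
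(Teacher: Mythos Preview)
Your proposal is correct, but it takes a genuinely different route from the paper.

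For the upper bound, the paper does not build an alternating model-checker over types; instead it reduces $SAPP$-validity to membership in $EQ^{\infty}$, the PSPACE-complete theory of pure equality over infinite sets. Concretely, it fixes a particular model $\mathcal{A}^{\ast}$ (lines $y=bx+c$ with $b,c\neq 0$), encodes each line by three ``coordinates'' $(b,-1/b,c)$, and defines a polynomial-size translation $\varphi\mapsto\widehat{\varphi}$ into the equality-only language such that $\mathcal{A}^{\ast}\models\varphi$ iff $\mathcal{R}\models\widehat{\varphi}$, where $\mathcal{R}=(\mathbb{R}\setminus\{0\},=)$. The bulk of the work is a sequence of lemmas about ``corresponding'' coordinate tuples showing this translation is sound. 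For the lower bound, the paper again goes through $EQ^{\infty}$: every equality-only sentence $\varphi_1$ satisfies $\varphi_1\in EQ^{\infty}$ iff $SAPP\models\varphi_1$, and PSPACE-hardness of $EQ^{\infty}$ is quoted from \cite{BalbianiTinchev}.

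Your approach is more self-contained: the APTIME model-checker exploits directly the $\omega$-categorical structure uncovered by Propositions~\ref{p2}--\ref{p3}, and your QBF reduction via two perpendicular reference classes avoids any appeal to the external $EQ^{\infty}$ result. The paper's approach, by contrast, makes the link to pure equality explicit and reuses a known complexity classification, at the cost of the somewhat delicate coordinate-translation lemmas. Both are valid; yours is arguably more transparent once the class/pair structure of the model is understood, while the paper's makes clear that perpendicularity adds no complexity beyond equality over an infinite set.
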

\begin{proof}
	We consider $EQ^{\infty}=\{\varphi:\ \varphi\textmd{ is a closed formula in the language }\mathcal{L}_1=\langle\ ;\ ;=\rangle\textmd{ and }\varphi\textmd{ is true in all infinite structures}\}$ \cite{BalbianiTinchev}. The membership problem in $EQ^{\infty}$ is PSPACE-complete \cite{BalbianiTinchev}.
	
	First we will prove that the problem if a closed formula in $\mathcal{L}$ logically follows from $SAPP$ is in PSPACE. It is enough to juxtapose to every closed formula $\varphi$ in $\mathcal{L}$ a closed formula $\varphi_1$ in $\mathcal{L}_1$ and to ensure that $\varphi_1$ can be obtained from $\varphi$ algorithmically with use of memory polynomial in the size of $\varphi$, and $SAPP\models\varphi$ iff $\varphi_1\in EQ^{\infty}$.
	
	Let $\bm{\mathcal{A}^{\ast}}$ be the substructure of $\mathcal{F}^2_{\mathbb{R}}$, which is obtained by eliminating of the lines parallel to or coinciding with the abscissa axis, the lines parallel to or coinciding with the ordinate axis and the lines with equation of the kind $y=bx$.
	
	\begin{lemma}
		The structure $\mathcal{A}^{\ast}$ is a model of $SAPP$.
	\end{lemma}
	\begin{proof}
		Let $n\geq 1$. We will prove $\mathcal{A}^{\ast}\models\lambda_{1n}$. Let $b_1,\ldots,b_n$, $c\in A^{\ast}$ and $O(c,b_1),\ldots, O(c,b_n)$. We will prove that there is $d\in A^{\ast}$ such that $d\neq b_1,\ldots,b_n$ and $O(c,d)$. We have that $b_1,\ldots,b_n$ are parallel and consequently have the same slope $m$. Let the $y$-intercepts of $b_1,\ldots,b_n$ be $q_1,\ldots,q_n$. Let $q\neq 0$, $q_1,\ldots,q_n$. Let $d$ be the line with equation $y=mx+q$. Clearly $d$ has the desired properties. 
		
		Let $n\geq 2$. We will prove that $\mathcal{A}^{\ast}\models\lambda_{2n}$. Let $b_1,\ldots,b_n\in A^{\ast}$. We will prove that there is $c\in A^{\ast}$ such that $\neg O(c,b_1),\ldots,\neg O(c,b_n)$. Let the slopes of $b_1,\ldots,b_n$ be correspondingly $m_1,\ldots,m_n$. Let $m\neq 0$, $-\frac{1}{m_1},\ldots,-\frac{1}{m_n}$. Let $c$ be the line with equation $y=mx+1$. Clearly $c$ has the desired properties.
		
		Clearly the other formulas of $SAPP$ are true in $\mathcal{A}^{\ast}$. $\Box$
	\end{proof}
	
	\begin{lemma}\label{lemma3}
		For every closed formula $\varphi$ in the language $\mathcal{L}$, $SAPP\models\varphi$ iff $\mathcal{A}^{\ast}\models\varphi$.
	\end{lemma}
	\begin{proof}
		Clearly $SAPP\models\varphi$ implies $\mathcal{A}^{\ast}\models\varphi$. Let $\mathcal{A}^{\ast}\models\varphi$ and $\mathcal{B}\models SAPP$. We will prove that $\mathcal{B}\models\varphi$. $\mathcal{A}^{\ast}$ is a model of $SAPP$ and $SAPP$ is complete (Corollary~\ref{cor1}); so $\mathcal{A}^{\ast}$ is elementary equivalent to every model of $SAPP$; so $\mathcal{A}^{\ast}$ is elementary equivalent to $\mathcal{B}$. Consequently $\mathcal{B}\models\varphi$. $\Box$
	\end{proof}
	
	Let $\bm{\mathcal{R}}$ be the structure for $\mathcal{L}_1$ with universe $\mathbb{R}\setminus\{0\}$.
	
	\begin{lemma}\label{lemma4}
		For any closed formula $\varphi$ in $\mathcal{L}_1$, $\mathcal{R}\models\varphi$ iff $\varphi\in EQ^{\infty}$.
	\end{lemma}
	\begin{proof}
		Clearly $\varphi\in EQ^{\infty}$ implies $\mathcal{R}\models\varphi$. Let $\mathcal{R}\models\varphi$. We will prove $\varphi\in EQ^{\infty}$. Let $\mathcal{A}$ be an infinite structure. We will prove that $\mathcal{A}\models\varphi$. We consider the theory $T=\{\varphi\}$. Let $C$ be a set of constants such that the cardinality of $C$ coincides with the cardinality of $A$. $T$ has an infinite model ($\mathcal{R}$) and can be considered as a theory in the language $\mathcal{L}_2=\langle C;\ ;=\rangle$ with cardinality - the cardinality of $A$. Thus by L\"{o}wenheim-Skolem theorem we obtain that $T$ has a model $\mathcal{B}$ for the language $\mathcal{L}_2$ such that the cardinality of $B$ is the maximal of $\omega$ and the cardinality of $A$, i.e. the cardinality of $A$. We consider $\mathcal{B}$ as a structure for $\mathcal{L}_1$ and since in $\varphi$ do not occur constants, we have again $\mathcal{B}\models\varphi$. $\mathcal{A}$ and $\mathcal{B}$ are structures for $\mathcal{L}_1$ and the cardinality of $A$ coincides with the cardinality of $B$; so $\mathcal{A}$ and $\mathcal{B}$ are isomorphic; so $\mathcal{A}\models\varphi$. $\Box$
	\end{proof}
	
	Let $a$ be a line with equation $y=bx+c$. We use the following notations: $\bm{a^1}=b$, $\bm{a^2}=-\frac{1}{b}$, $\bm{a^3}=c$. It is convenient to call $a^1$, $a^2$ and $a^3$ \textit{coordinates} of the line $a$.
	
	To every formula $\varphi$ in the language $\mathcal{L}$ we juxtapose a formula $\widehat{\varphi}$ in the language $\mathcal{L}_1$ in the following way:\\
	\textbf{1) }$\bm{\varphi}$\textbf{ - atomic}\\
	$(\bullet)$ If $\varphi$ is $O(x_1,x_2)$, then $\widehat{\varphi}$ is $x^1_1=x^2_2$.\\
	$(\bullet)$ If $\varphi$ is $x_1=x_2$, then $\widehat{\varphi}$ is $(x^1_1=x^1_2)\wedge(x^3_1=x^3_2)$.
	
	\vspace{2mm}
	\noindent
	\textbf{2) }$\bm{\varphi}$\textbf{ is }$\bm{\neg\varphi'}$\\
	$\widehat{\varphi}$ is $\neg\widehat{\varphi'}$.

	\vspace{2mm}
	\noindent
	\textbf{3) }$\bm{\varphi}$\textbf{ is }$\bm{\varphi'\wedge\varphi''}$\\
	$\widehat{\varphi}$ is $\widehat{\varphi'}\wedge\widehat{\varphi''}$.

	\vspace{2mm}
	\noindent
	\textbf{4) }$\bm{\varphi}$\textbf{ is }$\bm{\exists x_n\varphi'}$\textbf{ and }$\bm{\varphi'}$\textbf{ has free variables }$\bm{x_1,\ldots,x_n}$\\
	$\widehat{\varphi}$ is $\exists x^1_n\exists x^2_n\exists x^3_n(\widehat{\varphi'}\wedge\kappa_n)$, where for every natural number $n$, $\kappa_n$ is a formula with free variables $x^1_1$, $x^2_1,\ldots,x^1_n$, $x^2_n$, defined in the following way:
	\[\kappa_n:\ x^1_n\neq x^2_n\wedge\bigwedge_{i<n}(x^1_i=x^1_n\leftrightarrow x^2_i=x^2_n)\wedge\bigwedge_{i<n}(x^1_i=x^2_n\leftrightarrow x^2_i=x^1_n)\]

	\begin{definition}\label{def2}
		Let $n$ be a natural number, $a^1_1$, $a^2_1$, $a^3_1,\ldots,a^1_n$, $a^2_n$, $a^3_n\in\mathbb{R}\setminus\{0\}$ and for every $i\in\{1,\ldots,n\}$, $\mathcal{R}\models\kappa_i[a^1_1,a^2_1,\ldots,a^1_i,a^2_i]$. We say that $b^1_1$, $b^2_1$, $b^3_1,\ldots,b^1_n$, $b^2_n$, $b^3_n$ are \textit{corresponding} to $a^1_1$, $a^2_1$, $a^3_1,\ldots,a^1_n$, $a^2_n$, $a^3_n$ if for every $i\in\{1,\ldots,n\}$,
	
		\vspace{2mm}
		\noindent
		\textbf{1)} $b^1_i$, $b^2_i$, $b^3_i\in\mathbb{R}\setminus\{0\}$
	
		\vspace{2mm}
		\noindent
		\textbf{2)} if $a^1_i\notin\{a^1_1,a^2_1,\ldots,a^1_{i-1}, a^2_{i-1}\}$, then $b^1_i\notin\{b^1_1,b^2_1,\ldots,b^1_{i-1},b^2_{i-1}\}$
	
		\vspace{2mm}
		\noindent
		\textbf{3)} if $a^1_i=a^1_k$ for some $k\in\{1,\ldots,i-1\}$, then $b^1_i=b^1_k$
	
		\vspace{2mm}
		\noindent
		\textbf{4)} if $a^1_i=a^2_k$ for some $k\in\{1,\ldots,i-1\}$, then $b^1_i=b^2_k$
	
		\vspace{2mm}
		\noindent
		\textbf{5)} $b^2_i=-\frac{1}{b^1_i}$
	
		\vspace{2mm}
		\noindent
		\textbf{6)} $b^3_i=a^3_i$
	
	\end{definition}

	By induction on $j$ it can be easily proved the following 

	\begin{lemma}\label{lemma0}
		Let $b^1_1$, $b^2_1$, $b^3_1,\ldots,b^1_n$, $b^2_n$, $b^3_n$ be corresponding to $a^1_1$, $a^2_1$, $a^3_1,\ldots,a^1_n$, $a^2_n$, $a^3_n$. Then for any $j$ and $i$, % we have:\\
		if $1\leq i<j\leq n$, then\\
		1) $b^1_j=b^1_i$ implies $a^1_j=a^1_i$;\\
		2) $b^1_j=b^2_i$ implies $a^1_j=a^2_i$.
	\end{lemma}

	\begin{lemma}\label{lemma5}
		Let $b^1_1$, $b^2_1$, $b^3_1,\ldots,b^1_n$, $b^2_n$, $b^3_n$ be corresponding to $a^1_1$, $a^2_1$, $a^3_1,\ldots,a^1_n$, $a^2_n$, $a^3_n$ and $\varphi$ be $O(x,y)$ or $x=y$. Then for any $i$, $j\in\{1,\ldots,n\}$, $\mathcal{R}\models\widehat{\varphi}[a^1_i,a^2_i,a^3_i,a^1_j,a^2_j,a^3_j]$ iff $\mathcal{R}\models\widehat{\varphi}[b^1_i,b^2_i,b^3_i,b^1_j,b^2_j,b^3_j]$.
	\end{lemma}
	\begin{proof}
		Let $i$, $j\in\{1,\ldots,n\}$.\\
		\textbf{Case 1}: $i=j$\\
		The proof is trivial.\\
		\textbf{Case 2}: $i\neq j$\\
		Without loss of generality $i<j$.\\
		\textbf{Case 2.1}: $\varphi$ is $O(x,y)$\\
		$\widehat{\varphi}$ is $x^1=y^2$. We must prove the following two equivalences:\\
		1) $a^1_i=a^2_j$ iff $b^1_i=b^2_j$\\
		2) $a^1_j=a^2_i$ iff $b^1_j=b^2_i$\\
	
		Since $\mathcal{R}\models\kappa_j[a^1_1,a^2_1,\ldots,a^1_j,a^2_j]$, $a^1_i=a^2_j$ iff $a^2_i=a^1_j$. Since $b^1_j\cdot b^2_j=-1$ and $b^1_i\cdot b^2_i=-1$, $b^1_i=b^2_j$ iff $b^1_j=b^2_i$. Consequently it suffices to prove the second equivalence. By condition 4) of Definition~\ref{def2}, $a^1_j=a^2_i$ implies $b^1_j=b^2_i$. By Lemma~\ref{lemma0}, $b^1_j=b^2_i$ implies $a^1_j=a^2_i$.\\
		\textbf{Case 2.2}: $\varphi$ is $x=y$\\
		$\widehat{\varphi}$ is $(x^1=y^1)\wedge (x^3=y^3)$.\\
		We must prove the following equivalence:\\
		$(a^1_i=a^1_j\textmd{ and }a^3_i=a^3_j)$ iff $(b^1_i=b^1_j\textmd{ and }b^3_i=b^3_j)$\\
		By conditions 3) and 6) of Definition~\ref{def2} we get that $(a^1_i=a^1_j\textmd{ and }a^3_i=a^3_j)$ implies $(b^1_j=b^1_i\textmd{ and }b^3_i=b^3_j)$.\\
		By Lemma~\ref{lemma0} and condition 6) of Definition~\ref{def2} we get that $(b^1_j=b^1_i\textmd{ and }b^3_i=b^3_j)$ implies $(a^1_j=a^1_i\textmd{ and }a^3_i=a^3_j)$. $\Box$
	\end{proof}
	
	\begin{lemma}\label{lemma6}
		Let $b^1_1$, $b^2_1$, $b^3_1,\ldots,b^1_n$, $b^2_n$, $b^3_n$ be corresponding to $a^1_1$, $a^2_1$, $a^3_1,\ldots,a^1_n$, $a^2_n$, $a^3_n$ and $1\leq j_1\leq j_2\leq\ldots\leq j_m\leq n$. Then $b^1_{j_1}$, $b^2_{j_1}$, $b^3_{j_1},\ldots,b^1_{j_m}$, $b^2_{j_m}$, $b^3_{j_m}$ are corresponding to $a^1_{j_1}$, $a^2_{j_1}$, $a^3_{j_1},\ldots,a^1_{j_m}$, $a^2_{j_m}$, $a^3_{j_m}$.
	\end{lemma}
	\begin{proof}
		Since $b^1_1$, $b^2_1$, $b^3_1,\ldots,b^1_n$, $b^2_n$, $b^3_n$ are corresponding to $a^1_1$, $a^2_1$, $a^3_1,\ldots,a^1_n$, $a^2_n$, $a^3_n$, for every $i\in\{1,\ldots,m\}$,\\
		1) $\mathcal{R}\models\kappa_i[a^1_{j_1},a^2_{j_1},\ldots,a^1_{j_i},a^2_{j_i}]$;
		
		\vspace{2mm}
		\noindent
		2) $b^1_{j_i}$, $b^2_{j_i}$, $b^3_{j_i}\in\mathbb{R}\setminus\{0\}$;
		
		\vspace{2mm}
		\noindent
		3) $b^2_{j_i}=-\frac{1}{b^1_{j_i}}$;
		
		\vspace{2mm}
		\noindent
		4) $b^3_{j_i}=a^3_{j_i}$.
		
		\vspace{2mm}
		\noindent
		Let $i\in\{1,\ldots,m\}$. We consider\\
		\textbf{Case 1}: $a^1_{j_i}\notin\{a^1_{j_1},a^2_{j_1},\ldots,a^1_{j_{i-1}},a^2_{j_{i-1}}\}$\\
		We will prove that $b^1_{j_i}\notin\{b^1_{j_1},b^2_{j_1},\ldots,b^1_{j_{i-1}},b^2_{j_{i-1}}\}$. Suppose for the sake of contradiction the converse. By Lemma~\ref{lemma0}, $a^1_{j_i}\in\{a^1_{j_1},a^2_{j_1},\ldots,a^1_{j_{i-1}},a^2_{j_{i-1}}\}$ which is the desired contradiction.\\
		\textbf{Case 2}: $a^1_{j_i}\in\{a^1_{j_1},a^2_{j_1},\ldots,a^1_{j_{i-1}},a^2_{j_{i-1}}\}$\\
		Clearly:\\
		1) if $a^1_{j_i}=a^1_{j_k}$ for some $k\in\{1,\ldots,i-1\}$, then $b^1_{j_i}=b^1_{j_k}$;\\
		2) if $a^1_{j_i}=a^2_{j_k}$ for some $k\in\{1,\ldots,i-1\}$, then $b^1_{j_i}=b^2_{j_k}$.\\
		$\Box$
	\end{proof}

	\begin{lemma}\label{lemma7}
		Let $\varphi$ be a formula for $\mathcal{L}$ with free variables $x_1,\ldots,x_m$. Let $b^1_1$, $b^2_1$, $b^3_1,\ldots,b^1_n$, $b^2_n$, $b^3_n$ be corresponding to $a^1_1$, $a^2_1$, $a^3_1,\ldots,a^1_n$, $a^2_n$, $a^3_n$. Then for any $j_1,\ldots,j_m\in\{1,\ldots,n\}$, $\mathcal{R}\models\widehat{\varphi}[a^1_{j_1}, a^2_{j_1}, a^3_{j_1},\ldots,a^1_{j_m}, a^2_{j_m}, a^3_{j_m}]$ iff $\mathcal{R}\models\widehat{\varphi}[b^1_{j_1}, b^2_{j_1}, b^3_{j_1},\ldots,b^1_{j_m}, b^2_{j_m}, b^3_{j_m}]$.
	\end{lemma}
	\begin{proof}
		Induction on $\varphi$. For the base of induction we use Lemma~\ref{lemma5}. If $\varphi$ is $\neg\varphi'$ or $\varphi'\wedge\varphi''$, the proof is trivial. Let $\varphi$ be $\exists x_{m+1}\varphi'$ and $\varphi'$ has free variables $x_1,\ldots,x_{m+1}$. Let $b^1_1$, $b^2_1$, $b^3_1,\ldots,b^1_n$, $b^2_n$, $b^3_n$ be corresponding to $a^1_1$, $a^2_1$, $a^3_1,\ldots,a^1_n$, $a^2_n$, $a^3_n$. Let $j_1,\ldots,j_m\in\{1,\ldots,n\}$. Without loss of generality $j_1\leq\ldots\leq j_m$. We will prove that $\mathcal{R}\models\exists x^1_{m+1}\exists x^2_{m+1}\exists x^3_{m+1}(\widehat{\varphi'}\wedge\kappa_{m+1})[a^1_{j_1},a^2_{j_1},a^3_{j_1},\ldots,a^1_{j_m},a^2_{j_m},a^3_{j_m}]$ iff\\
		$\mathcal{R}\models\exists x^1_{m+1}\exists x^2_{m+1}\exists x^3_{m+1}(\widehat{\varphi'}\wedge\kappa_{m+1})[b^1_{j_1},b^2_{j_1},b^3_{j_1},\ldots,b^1_{j_m},b^2_{j_m},b^3_{j_m}]$.
		
		\vspace{2 mm}
		\noindent
		$\bm{\Rightarrow)}$ There are $c^1$, $c^2$, $c^3\in\mathbb{R}\setminus\{0\}$ such that \\
		$\mathcal{R}\models(\widehat{\varphi'}\wedge\kappa_{m+1})[a^1_{j_1},a^2_{j_1},a^3_{j_1},\ldots,a^1_{j_m},a^2_{j_m},a^3_{j_m},c^1,c^2,c^3]$.
		
		\begin{itemize}
		\item If $c^1\notin\{a^1_{j_1},a^2_{j_1},\ldots,a^1_{j_m},a^2_{j_m}\}$, we choose a real number $d^1$, different from $0$, such that $d^1\notin\{b^1_{j_1},b^2_{j_1},\ldots,b^1_{j_m},b^2_{j_m}\}$.
		\item If $c^1=a^1_{j_k}$ for some $k\in\{1,\ldots,m\}$, we denote $d^1=b^1_{j_k}$.
		\item If $c^1=a^2_{j_k}$ for some $k\in\{1,\ldots,m\}$, we denote $d^1=b^2_{j_k}$.
		\end{itemize}
	
	\noindent
	We denote $d^2=-\frac{1}{d^1}$, $d^3=c^3$. Using Lemma~\ref{lemma6}, we get that $b^1_{j_1}$, $b^2_{j_1}$, $b^3_{j_1},\ldots,b^1_{j_m}$, $b^2_{j_m}$, $b^3_{j_m}$, $d^1$, $d^2$, $d^3$ are corresponding to $a^1_{j_1}$, $a^2_{j_1}$, $a^3_{j_1},\ldots,a^1_{j_m}$, $a^2_{j_m}$, $a^3_{j_m}$, $c^1$, $c^2$, $c^3$. By the induction hypothesis, $\mathcal{R}\models\widehat{\varphi'}[b^1_{j_1}, b^2_{j_1}, b^3_{j_1},\ldots,b^1_{j_m},b^2_{j_m},b^3_{j_m},\\
	d^1,d^2,d^3]$. Since for every $k\in\{1,\ldots,m\}$, $b^1_{j_k}\cdot b^2_{j_k}=-1$ and $d^1\cdot d^2=-1$,\\ $\mathcal{R}\models\kappa_{m+1}[b^1_{j_1}, b^2_{j_1}, b^3_{j_1},\ldots,b^1_{j_m},b^2_{j_m},b^3_{j_m},d^1,d^2,d^3]$.
	
	\noindent
	$\bm{\Leftarrow)}$ There are $d^1$, $d^2$, $d^3\in\mathbb{R}\setminus\{0\}$ such that $\mathcal{R}\models(\widehat{\varphi'}\wedge\kappa_{m+1})[b^1_{j_1}, b^2_{j_1}, b^3_{j_1},\ldots,b^1_{j_m},\\b^2_{j_m},b^3_{j_m},
	d^1,d^2,d^3]$. By Lemma~\ref{lemma6}, $b^1_{j_1}$, $b^2_{j_1}$, $b^3_{j_1},\ldots,b^1_{j_m}$, $b^2_{j_m}$, $b^3_{j_m}$ are corresponding to $a^1_{j_1}$, $a^2_{j_1}$, $a^3_{j_1},\ldots,a^1_{j_m}$, $a^2_{j_m}$, $a^3_{j_m}$ and hence $\mathcal{R}\models\kappa_k[a^1_{j_1},a^2_{j_1},\ldots,a^1_{j_k},a^2_{j_k}]$ for every $k\in\{1,\ldots,m\}$.\\
	\textbf{Case 1}: $d^1\notin\{b^1_{j_1},b^2_{j_1},\ldots,b^1_{j_m},b^2_{j_m}\}$\\
	Let $e^1$ be a real number, different from zero, such that $e^1\notin\{b^1_{j_1},b^2_{j_1},\ldots,b^1_{j_m},b^2_{j_m}\}$. We denote $e^2=-\frac{1}{e^1}$, $e^3=d^3$. It can be easily proved that $b^1_{j_1}$, $b^2_{j_1}$, $b^3_{j_1},\ldots,b^1_{j_m}$, $b^2_{j_m}$, $b^3_{j_m}$, $e^1$, $e^2$, $e^3$ are corresponding to $b^1_{j_1}$, $b^2_{j_1}$, $b^3_{j_1},\ldots,b^1_{j_m}$, $b^2_{j_m}$, $b^3_{j_m}$, $d^1$, $d^2$, $d^3$. By the induction hypothesis, $\mathcal{R}\models\widehat{\varphi'}[b^1_{j_1}, b^2_{j_1}, b^3_{j_1},\ldots,b^1_{j_m}, b^2_{j_m}, b^3_{j_m}, e^1, e^2, e^3]$.
	
	Let $c^1$ and $c^2$ be different real numbers such that $c^1$, $c^2\notin\{0,a^1_{j_1},a^2_{j_1},\ldots,a^1_{j_m},\\a^2_{j_m}\}$. We will prove that $b^1_{j_1}$, $b^2_{j_1}$, $b^3_{j_1},\ldots,b^1_{j_m}$, $b^2_{j_m}$, $b^3_{j_m}$, $e^1$, $e^2$, $e^3$ are corresponding to $a^1_{j_1}$, $a^2_{j_1}$, $a^3_{j_1},\ldots,a^1_{j_m}$, $a^2_{j_m}$, $a^3_{j_m}$, $c^1$, $c^2$, $e^3$. Obviously $\mathcal{R}\models\kappa_{m+1}[a^1_{j_1},a^2_{j_1},\ldots,a^1_{j_m},a^2_{j_m},c^1,c^2]$. Since $b^1_{j_1}$, $b^2_{j_1}$, $b^3_{j_1},\ldots,b^1_{j_m}$, $b^2_{j_m}$, $b^3_{j_m}$ are corresponding to $a^1_{j_1}$, $a^2_{j_1}$, $a^3_{j_1},\ldots,a^1_{j_m}$, $a^2_{j_m}$, $a^3_{j_m}$, we have that conditions $2)$, $3)$ and $4)$ of Definition~\ref{def2} are fulfilled for every step from $1$ to $m$. Obviously they are fulfilled also for step $m+1$. Clearly conditions $5)$ and $6)$ of Definition~\ref{def2} are fulfilled at every step. Thus $b^1_{j_1}$, $b^2_{j_1}$, $b^3_{j_1},\ldots,b^1_{j_m}$, $b^2_{j_m}$, $b^3_{j_m}$, $e^1$, $e^2$, $e^3$ are corresponding to $a^1_{j_1}$, $a^2_{j_1}$, $a^3_{j_1},\ldots,a^1_{j_m}$, $a^2_{j_m}$, $a^3_{j_m}$, $c^1$, $c^2$, $e^3$. By the induction hypothesis, $\mathcal{R}\models\widehat{\varphi'}[a^1_{j_1}, a^2_{j_1}, a^3_{j_1},\ldots,a^1_{j_m}, a^2_{j_m}, a^3_{j_m}, c^1, c^2, e^3]$ iff $\mathcal{R}\models\widehat{\varphi'}[b^1_{j_1}, b^2_{j_1}, b^3_{j_1},\ldots,b^1_{j_m}, b^2_{j_m}, b^3_{j_m}, e^1, e^2, e^3]$. Consequently $\mathcal{R}\models\widehat{\varphi'}[a^1_{j_1}, a^2_{j_1}, a^3_{j_1},\ldots,\\a^1_{j_m}, a^2_{j_m}, a^3_{j_m}, c^1, c^2, e^3]$. We have also that $\mathcal{R}\models\kappa_{m+1}[a^1_{j_1}, a^2_{j_1}, \ldots,a^1_{j_m}, a^2_{j_m}, c^1, c^2]$. Thus $\mathcal{R}\models(\widehat{\varphi'}\wedge\kappa_{m+1})[a^1_{j_1}, a^2_{j_1}, a^3_{j_1},\ldots,a^1_{j_m}, a^2_{j_m}, a^3_{j_m}, c^1, c^2, e^3]$, i.e. $\mathcal{R}\models\\\exists x^1_{m+1}\exists x^2_{m+1}\exists x^3_{m+1}(\widehat{\varphi'}\wedge\kappa_{m+1})[a^1_{j_1}, a^2_{j_1}, a^3_{j_1},\ldots,a^1_{j_m}, a^2_{j_m}, a^3_{j_m}]$.\\
	\textbf{Case 2}: $d^1\in\{b^1_{j_1},b^2_{j_1},\ldots,b^1_{j_m},b^2_{j_m}\}$\\
	We proved that $b^1_{j_1}$, $b^2_{j_1}$, $b^3_{j_1},\ldots,b^1_{j_m}$, $b^2_{j_m}$, $b^3_{j_m}$ are corresponding to $a^1_{j_1}$, $a^2_{j_1}$, $a^3_{j_1},\ldots,a^1_{j_m}$, $a^2_{j_m}$, $a^3_{j_m}$ and hence $b^1_{j_k}\cdot b^2_{j_k}=-1$ for every $k\in\{1,\ldots,m\}$. We have also that $d^1=b^1_{j_k}$ or $d^1=b^2_{j_k}$ for some $k\in\{1,\ldots,m\}$ and $\mathcal{R}\models\kappa_{m+1}[b^1_{j_1},b^2_{j_1},\ldots,b^1_{j_m},b^2_{j_m},d^1,d^2]$. Consequently $d^1\cdot d^2=-1$.\\
	\textbf{Case 2.1}: $d^1=b^1_{j_k}$ for some $k\in\{1,\ldots,m\}$\\
	We denote $c^1=a^1_{j_k}$, $c^2=a^2_{j_k}$. We will prove that $b^1_{j_1}$, $b^2_{j_1}$, $b^3_{j_1},\ldots,b^1_{j_m}$, $b^2_{j_m}$, $b^3_{j_m}$, $d^1$, $d^2$, $d^3$ are corresponding to $a^1_{j_1}$, $a^2_{j_1}$, $a^3_{j_1},\ldots,a^1_{j_m}$, $a^2_{j_m}$, $a^3_{j_m}$, $c^1$, $c^2$, $d^3$. We proved that $\mathcal{R}\models\kappa_l[a^1_{j_1},a^2_{j_1},\ldots,a^1_{j_l},a^2_{j_l}]$ for every $l\in\{1,\ldots,m\}$. We will prove that $\mathcal{R}\models\kappa_{m+1}[a^1_{j_1},a^2_{j_1},\ldots,a^1_{j_m},a^2_{j_m},c^1,c^2]$. Since $\mathcal{R}\models\kappa_k[a^1_{j_1},a^2_{j_1},\ldots,a^1_{j_k},a^2_{j_k}]$, $a^1_{j_k}\neq a^2_{j_k}$, i.e. $c^1\neq c^2$. It remains to prove that for every $l\in\{1,\ldots,m\}$, ($a^1_{j_l}=a^1_{j_k}$ iff $a^2_{j_l}=a^2_{j_k}$) and ($a^1_{j_l}=a^2_{j_k}$ iff $a^2_{j_l}=a^1_{j_k}$). If $l=k$, then these equivalences are obvious. If $l<k$, then they follow from $\mathcal{R}\models\kappa_k[a^1_{j_1}, a^2_{j_1},\ldots,a^1_{j_k}, a^2_{j_k}]$; if $l>k$, then they follow from $\mathcal{R}\models\kappa_l[a^1_{j_1}, a^2_{j_1},\ldots,a^1_{j_l}, a^2_{j_l}]$. Thus $\mathcal{R}\models\kappa_{m+1}[a^1_{j_1}, a^2_{j_1},\ldots,a^1_{j_m}, a^2_{j_m},c^1,c^2]$. Conditions 2), 3) and 4) of Definition~\ref{def2} are fulfilled for every step from $1$ to $m$ because $b^1_{j_1}$, $b^2_{j_1}$, $b^3_{j_1},\ldots,b^1_{j_m}$, $b^2_{j_m}$, $b^3_{j_m}$ are corresponding to $a^1_{j_1}$, $a^2_{j_1}$, $a^3_{j_1},\ldots,a^1_{j_m}$, $a^2_{j_m}$, $a^3_{j_m}$. Obviously at step $m+1$ condition 2) of Definition~\ref{def2} is fulfilled.
	
	We will prove that condition 3) of Definition~\ref{def2} is fulfilled at step $m+1$. Let $c^1=a^1_{j_l}$ for some $l\in\{1,\ldots,m\}$. We will prove that $d^1=b^1_{j_l}$. Using that $c^1=a^1_{j_k}=a^1_{j_l}$ and the fact that $b^1_{j_1}$, $b^2_{j_1}$, $b^3_{j_1},\ldots,b^1_{j_m}$, $b^2_{j_m}$, $b^3_{j_m}$ are corresponding to $a^1_{j_1}$, $a^2_{j_1}$, $a^3_{j_1},\ldots,a^1_{j_m}$, $a^2_{j_m}$, $a^3_{j_m}$, we obtain that $b^1_{j_k}=b^1_{j_l}$. Consequently $d^1=b^1_{j_l}$.
	
	We will prove that condition 4) of Definition~\ref{def2} is fulfilled at step $m+1$. Let $c^1=a^2_{j_l}$ for some $l\in\{1,\ldots,m\}$.\\
	\textbf{Case a}: $l>k$\\
	Since $\mathcal{R}\models\kappa_l[a^1_{j_1},a^2_{j_1},\ldots,a^1_{j_l},a^2_{j_l}]$, $a^1_{j_k}=a^2_{j_l}$ iff $a^2_{j_k}=a^1_{j_l}$ and hence $a^1_{j_l}=a^2_{j_k}$. We have that $b^1_{j_1}$, $b^2_{j_1}$, $b^3_{j_1},\ldots,b^1_{j_m}$, $b^2_{j_m}$, $b^3_{j_m}$ are corresponding to $a^1_{j_1}$, $a^2_{j_1}$, $a^3_{j_1},\ldots,a^1_{j_m}$, $a^2_{j_m}$, $a^3_{j_m}$ and by condition 4) of Definition~\ref{def2} we get $b^1_{j_l}=b^2_{j_k}$. Consequently $b^1_{j_k}=b^2_{j_l}$ and hence $d^1=b^2_{j_l}$.\\
	\textbf{Case b}: $k\geq l$\\
	We have proved that $a^1_{j_k}\neq a^2_{j_k}$, so $k\neq l$. Thus $k>l$. We have that $a^1_{j_k}=a^2_{j_l}$ and $b^1_{j_1}$, $b^2_{j_1}$, $b^3_{j_1},\ldots,b^1_{j_m}$, $b^2_{j_m}$, $b^3_{j_m}$ are corresponding to $a^1_{j_1}$, $a^2_{j_1}$, $a^3_{j_1},\ldots,a^1_{j_m}$, $a^2_{j_m}$, $a^3_{j_m}$. Consequently $b^1_{j_k}=b^2_{j_l}$ and hence $d^1=b^2_{j_l}$.
	
	Consequently condition 4) of Definition~\ref{def2} is fulfilled at step $m+1$. Clearly for every step, conditions 5) and 6) of Definition~\ref{def2} are fulfilled. Thus we proved that $b^1_{j_1}$, $b^2_{j_1}$, $b^3_{j_1},\ldots,b^1_{j_m}$, $b^2_{j_m}$, $b^3_{j_m}$, $d^1$, $d^2$, $d^3$ are corresponding to $a^1_{j_1}$, $a^2_{j_1}$, $a^3_{j_1},\ldots,a^1_{j_m}$, $a^2_{j_m}$, $a^3_{j_m}$, $c^1$, $c^2$, $d^3$. By the induction hypothesis, $\mathcal{R}\models\widehat{\varphi'}[a^1_{j_1},a^2_{j_1},a^3_{j_1},\ldots,a^1_{j_m},a^2_{j_m},a^3_{j_m},c^1,c^2,d^3]$ iff $\mathcal{R}\models\widehat{\varphi'}[b^1_{j_1},b^2_{j_1},b^3_{j_1},\ldots,b^1_{j_m},b^2_{j_m},b^3_{j_m},\\d^1,d^2,d^3]$. Consequently $\mathcal{R}\models\widehat{\varphi'}[a^1_{j_1},a^2_{j_1},a^3_{j_1},\ldots,a^1_{j_m},a^2_{j_m},a^3_{j_m},c^1,c^2,d^3]$. We have also that $\mathcal{R}\models\kappa_{m+1}[a^1_{j_1},a^2_{j_1},\ldots,a^1_{j_m},a^2_{j_m},c^1,c^2]$. Consequently $\mathcal{R}\models\exists x^1_{m+1}\exists x^2_{m+1}\exists x^3_{m+1}(\widehat{\varphi'}\wedge\kappa_{m+1})[a^1_{j_1},a^2_{j_1},a^3_{j_1},\ldots,a^1_{j_m},a^2_{j_m},a^3_{j_m}]$.\\
	\textbf{Case 2.2}: $d^1=b^2_{j_k}$ for some $k\in\{1,\ldots,m\}$\\
	We denote $c^1=a^2_{j_k}$, $c^2=a^1_{j_k}$. In a similar way as in Case 2.1 we prove that $b^1_{j_1}$, $b^2_{j_1}$, $b^3_{j_1},\ldots,b^1_{j_m}$, $b^2_{j_m}$, $b^3_{j_m}$, $d^1$, $d^2$, $d^3$ are corresponding to $a^1_{j_1}$, $a^2_{j_1}$, $a^3_{j_1},\ldots,a^1_{j_m}$, $a^2_{j_m}$, $a^3_{j_m}$, $c^1$, $c^2$, $d^3$ and $\mathcal{R}\models\exists x^1_{m+1}\exists x^2_{m+1}\exists x^3_{m+1}(\widehat{\varphi'}\wedge\kappa_{m+1})[a^1_{j_1},a^2_{j_1},a^3_{j_1},\ldots,a^1_{j_m},\\a^2_{j_m},a^3_{j_m}]$. $\Box$
	\end{proof}
	
	\begin{lemma}\label{lemma8}
		Let $\varphi$ be a formula in $\mathcal{L}$ with free variables $x_1,\ldots,x_n$ and $a_1,\ldots,a_n\\\in A^{\ast}$. Then $\mathcal{A}^{\ast}\models\varphi[a_1,\ldots,a_n]$ iff $\mathcal{R}\models\widehat{\varphi}[a^1_1,a^2_1,a^3_1,\ldots,a^1_n,a^2_n,a^3_n]$.
	\end{lemma}
	\begin{proof}
		Induction on $\varphi$. The base of induction, the cases when $\varphi$ is $\neg\varphi'$ or $\varphi$ is $\varphi'\wedge\varphi''$ are trivial. Let $\varphi$ be $\exists x_{n+1}\varphi'$ and $\varphi'$ has free variables $x_1,\ldots,x_{n+1}$. Let $a_1,\ldots,a_n\in A^{\ast}$. We will prove that $\mathcal{A}^{\ast}\models\exists x_{n+1}\varphi'[a_1,\ldots,a_n]$ iff $\mathcal{R}\models\exists x^1_{n+1}\exists x^2_{n+1}\exists x^3_{n+1}(\widehat{\varphi'}\wedge\kappa_{n+1})[a^1_1,a^2_1,a^3_1,\ldots,a^1_n,a^2_n,a^3_n]$.\\
		$\bm{\Rightarrow)}$ The proof is trivial.\\
		$\bm{\Leftarrow)}$ There are $a^1_{n+1}$, $a^2_{n+1}$, $a^3_{n+1}\in\mathbb{R}\setminus\{0\}$ such that $\mathcal{R}\models(\widehat{\varphi'}\wedge\kappa_{n+1})[a^1_1,a^2_1,a^3_1,\ldots,\\a^1_n,a^2_n,a^3_n,a^1_{n+1},a^2_{n+1},a^3_{n+1}]$. If $a^1_{n+1}\notin\{a^1_1,a^2_1,\ldots,a^1_n,a^2_n\}$, let $b^1_{n+1}$ be a real number, different from $0$, not belonging to $\{a^1_1,a^2_1,\ldots,a^1_n,a^2_n\}$; if $a^1_{n+1}=a^1_k$ for some $k\in\{1,\ldots,n\}$, we denote $b^1_{n+1}=a^1_k$; if $a^1_{n+1}=a^2_k$ for some $k\in\{1,\ldots,n\}$, we denote $b^1_{n+1}=a^2_k$. We denote $b^2_{n+1}=-\frac{1}{b^1_{n+1}}$. Clearly the definition is correct. It can be easily verified that $a^1_1$, $a^2_1$, $a^3_1,\ldots,a^1_n$, $a^2_n$, $a^3_n$, $b^1_{n+1}$, $b^2_{n+1}$, $a^3_{n+1}$ are corresponding to $a^1_1$, $a^2_1$, $a^3_1,\ldots,a^1_n$, $a^2_n$, $a^3_n$, $a^1_{n+1}$, $a^2_{n+1}$, $a^3_{n+1}$. By Lemma~\ref{lemma7}, $\mathcal{R}\models\widehat{\varphi'}[a^1_1,a^2_1,a^3_1,\ldots,a^1_n,a^2_n,a^3_n,a^1_{n+1},a^2_{n+1},a^3_{n+1}]$ iff $\mathcal{R}\models\widehat{\varphi'}[a^1_1,a^2_1,a^3_1,\ldots,a^1_n,a^2_n,a^3_n,b^1_{n+1},b^2_{n+1},a^3_{n+1}]$. Consequently $\mathcal{R}\models\widehat{\varphi'}[a^1_1,a^2_1,a^3_1,\\\ldots,a^1_n,a^2_n,a^3_n,b^1_{n+1},b^2_{n+1},a^3_{n+1}]$. Let $a_{n+1}$ be the line with equation $y=b^1_{n+1}x+a^3_{n+1}$. Obviously $a_{n+1}\in A^{\ast}$. The coordinates of $a_{n+1}$ are $b^1_{n+1}$, $b^2_{n+1}$, $a^3_{n+1}$. By the induction hypothesis, $\mathcal{A}^{\ast}\models\varphi'[a_1,\ldots,a_n,a_{n+1}]$ iff $\mathcal{R}\models\widehat{\varphi'}[a^1_1,a^2_1,a^3_1,\ldots,a^1_n,\\a^2_n,a^3_n,b^1_{n+1},b^2_{n+1},a^3_{n+1}]$. Consequently $\mathcal{A}^{\ast}\models\varphi'[a_1,\ldots,a_n,a_{n+1}]$ and hence $\mathcal{A}^{\ast}\models\exists x_{n+1}\varphi'[a_1,\ldots,a_n]$. $\Box$
	\end{proof}
	
	\bigskip
	It can be easily verified that $\widehat{\varphi}$ can be obtained algorithmically from $\varphi$ by using of memory, polynomial in the size of $\varphi$.
	
	By Lemma~\ref{lemma3}, Lemma~\ref{lemma8} and Lemma~\ref{lemma4}, $SAPP\models\varphi$ iff $\mathcal{A}^{\ast}\models\varphi$ iff $\mathcal{R}\models\widehat{\varphi}$ iff $\widehat{\varphi}\in EQ^{\infty}$.
	
	Now we will prove that the problem if a closed formula in $\mathcal{L}$ logically follows from $SAPP$ is PSPACE-hard. We will prove that for every closed formula in $\mathcal{L}_1=\langle\ ;\ ;=\rangle$ $\varphi_1$, $\varphi_1\in EQ^{\infty}$ iff $SAPP\models\varphi_1$. The formula $\varphi_1$ is a formula in $\mathcal{L}$ too. By Lemma~\ref{lemma3}, $SAPP\models\varphi_1$ iff $\mathcal{A}^{\ast}\models\varphi_1$. Similarly of Lemma~\ref{lemma4} we prove that $\mathcal{A}^{\ast}\models\varphi_1$ iff $\varphi_1\in EQ^{\infty}$. Thus we proved that for every closed formula in $\mathcal{L}_1$ $\varphi_1$, $\varphi_1\in EQ^{\infty}$ iff $SAPP\models\varphi_1$. We have also that the membership problem in $EQ^{\infty}$ is PSPACE-hard. Consequently the problem if a closed formula in $\mathcal{L}$ logically follows from $SAPP$ is PSPACE-hard. $\Box$
\end{proof}	

\section{Some additional results}

\begin{proposition}
	The theory $SAPP$ is not finitely axiomatizable. 
\end{proposition}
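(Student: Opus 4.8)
The plan is to combine the classical compactness criterion for non-finite-axiomatizability with the structural description of models of $SAPP$ already obtained in Propositions 1 and 2. First I would record the standard reduction. Suppose, for contradiction, that the deductive closure of $SAPP$ were finitely axiomatizable, say by a single sentence $\sigma$ (the conjunction of the finitely many axioms). Since $\sigma$ is a logical consequence of $SAPP$, by compactness it already follows from some finite subset $\Phi\subseteq SAPP$, so that $\text{Mod}(\Phi)\subseteq\text{Mod}(\sigma)=\text{Mod}(SAPP)$; that is, this finite $\Phi$ would axiomatize the whole theory. Hence it suffices to exhibit, for every finite $\Phi\subseteq SAPP$, a structure $\mathcal{M}$ with $\mathcal{M}\models\Phi$ but $\mathcal{M}\not\models SAPP$.

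Next I would reread the axioms through the equivalence relation $R_1$ and the pairing $R_2$. By Propositions 1 and 2 a model of $SAPP$ is, up to the reading of $O$, a family of $R_1$-classes partitioned by $R_2$ into perpendicular pairs, with $O(x,y)$ holding exactly when $[x]$ and $[y]$ form such a pair. A direct analysis of the schemas then shows that, in the presence of $\lambda_3$--$\lambda_6$, the formula $\lambda_{1n}$ amounts to ``every class has more than $n$ elements'' (the ``new $t$'' cannot be produced once a perpendicular class of size $\le n$ is exhausted by $y_1,\dots,y_n$, padding with repetitions), while $\lambda_{2n}$ amounts to ``there are more than $n$ classes''. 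Thus a finite $\Phi$ imposes only a finite lower bound $N$ on the size of each class and on the number of classes.

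Then, given $\Phi$, I would let $N$ bound all indices $n$ for which $\lambda_{1n}$ or $\lambda_{2n}$ occurs in $\Phi$, and build a finite model $\mathcal{M}_N$: take $N+1$ perpendicular pairs of classes (so $2N+2$ classes), each class of size exactly $N+1$, and declare $O(x,y)$ to hold iff $[x]$ and $[y]$ are paired. I would verify that $\mathcal{M}_N\models\lambda_3,\lambda_4,\lambda_5,\lambda_6$ (routine irreflexivity, symmetry and pairing checks, with $\lambda_6$ guaranteeing that the perpendicularity pattern is exactly the paired-class pattern), that $\mathcal{M}_N\models\lambda_{1n}$ for $n\le N$ since each class has $N+1>n$ elements, and that $\mathcal{M}_N\models\lambda_{2n}$ for $n\le N$ since there are $2N+2>N$ classes; hence $\mathcal{M}_N\models\Phi$. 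On the other hand each class has exactly $N+1$ elements, so taking $y_1,\dots,y_{N+1}$ to exhaust one perpendicular class witnesses the failure of $\lambda_{1,N+1}$; as $\lambda_{1,N+1}\in SAPP$, we conclude $\mathcal{M}_N\not\models SAPP$.

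Assembling the pieces, $\mathcal{M}_N$ is a model of $\Phi$ that is not a model of $SAPP$, so no finite $\Phi\subseteq SAPP$ axiomatizes the theory, and by the reduction of the first paragraph $SAPP$ is not finitely axiomatizable. The genuinely delicate point, and the one I would carry out most carefully, is the exact translation of the schemas into the numerical conditions on class size and class number --- in particular confirming that the existential clause of $\lambda_{1n}$ forces \emph{strictly} more than $n$ elements, and that $\lambda_6$ really does pin down the structure to the paired-class pattern, so that the finite gadget is certified to be a model of the non-counting axioms $\lambda_3$--$\lambda_6$.
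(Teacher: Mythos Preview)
Your argument is correct, but it follows a different path from the paper's.  You invoke the compactness reduction (if $SAPP$ were finitely axiomatizable then already some finite $\Phi\subseteq SAPP$ would axiomatize it), and then defeat each such $\Phi$ by building an abstract finite gadget $\mathcal{M}_N$ of $2N+2$ paired classes of size $N+1$, checking the axioms $\lambda_3$--$\lambda_6$ and the finitely many instances $\lambda_{1n},\lambda_{2n}$ ($n\le N$) by hand via the $R_1/R_2$ analysis of Propositions~1 and~2.  The paper, by contrast, does \emph{not} pass through compactness: it takes an arbitrary finite axiomatization $\Gamma$ (not assumed to be a subset of $SAPP$), lets $k$ exceed the maximal quantifier rank in $\Gamma$, and builds a finite substructure $\mathcal{S}\subseteq\mathcal{F}^2_{\mathbb Q}$ which it shows to be $k$-elementary equivalent to $\mathcal{F}^2_{\mathbb Q}$ via an Ehrenfeucht--Fra\"{\i}ss\'e game; this transfers $\Gamma$ to $\mathcal{S}$ in one stroke, while $\mathcal{S}\not\models\lambda_{1k}$.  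Your route is more elementary---it avoids EF games and needs only direct verification of the listed axioms---whereas the paper's route yields the slightly stronger statement that $\mathcal{S}$ agrees with the intended model on \emph{all} sentences of bounded quantifier rank, not merely on the specific schema instances, and handles arbitrary $\Gamma$ without the compactness detour.  Both constructions produce essentially the same finite counter-model (finitely many perpendicular pairs of finite parallel-classes); the difference lies purely in how one certifies that it satisfies the putative finite axiomatization.
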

\begin{proof}
	For the sake of contradiction suppose that $SAPP$ is axiomatized by a finite set of formulas $\Gamma$. Let the maximal quantifier rank of a formula of $\Gamma$ be $n$. We denote $k=n+1$. Obviously $k>0$. Let $S$ be the set of the lines with equation of the kind $y=ax+b$, where $a\in\{1,\ldots,k,-1,-\frac{1}{2},\ldots,-\frac{1}{k}\}$, $b\in\{1,\ldots,k\}$. Obviously $S\subseteq F^2_{\mathbb{Q}}$. Let $\mathcal{S}$ be the substructure of $\mathcal{F}^2_{\mathbb{Q}}$ with universe $S$. We consider the structures $\mathcal{F}^2_{\mathbb{Q}}$ and $\mathcal{S}$ and Ehrenfeucht-Fra\"{\i}ss\`{e}'s game with length $k$. It can be easily found a winning strategy for the second player. Consequently $\mathcal{F}^2_{\mathbb{Q}}$ and $\mathcal{S}$ are $k$ elementary equivalent. Consequently for every formula $\varphi$ of $\Gamma$, $\mathcal{F}^2_{\mathbb{Q}}\models\varphi$ iff $\mathcal{S}\models\varphi$. Consequently $\mathcal{S}\models\Gamma$ and hence $\mathcal{S}\models SAPP$ but $\mathcal{S}\not\models\lambda_{1k}$ - a contradiction. $\Box$
\end{proof}

\bigskip
It can be easily proved the following proposition:

\begin{proposition}
	The theory $SAPP$ is $\omega$-categorical. 
\end{proposition}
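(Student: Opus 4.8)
The plan is to show that $SAPP$ is $\omega$-categorical by proving that any two countable models are isomorphic, using the structural analysis of models already established in Proposition~\ref{p2}. The key observation is that Proposition~\ref{p3} together with the completeness result (Corollary~\ref{cor1}) almost immediately yields the claim: any countable model $\mathcal{A}$ of $SAPP$ is elementarily embedded in any model $\mathcal{B}$ of $SAPP$, and in particular two countable models are each elementarily embedded in the other. However, elementary embedding alone is not isomorphism, so I would instead argue directly from the coordinatization of models by their $R_1$-classes.

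First I would recall the combinatorial skeleton of a countable model established earlier. By Proposition~\ref{p2}, the equivalence classes modulo $R_1$ come paired: to each class $[x]$ there corresponds a unique distinct class $[y]$ with $[x] R_2 [y]$, and $R_2$ is symmetric (this follows since $O(x,y)\Leftrightarrow O(y,x)$ by $\lambda_4$). Thus the $R_1$-classes of any countable model partition into countably many unordered pairs, and by the proof of Proposition~\ref{p3} every such class is countably infinite. So a countable model is determined up to isomorphism by the following data: a countable set of pairs of $R_1$-classes, each class countably infinite, with $O$ holding between two elements exactly when they lie in paired classes. Since this data is the same for any two countable models, I would build an isomorphism explicitly.

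Concretely, given two countable models $\mathcal{A}$ and $\mathcal{B}$, I would enumerate the $R_1$-classes of $\mathcal{A}$ as paired classes $[a_n], [a_n']$ (with $[a_n] R_2 [a_n']$) and those of $\mathcal{B}$ as $[b_n], [b_n']$, exactly as in the proof of Proposition~\ref{p3}. Since each class is countably infinite, I can choose \emph{bijections} $h_n\colon [a_n]\to[b_n]$ and $h_n'\colon [a_n']\to[b_n']$ rather than mere injections, and assemble them into a single map $f\colon A\to B$. This $f$ is a bijection because it is a bijection on each class and the classes partition the universes. It preserves $O$ because $O(x,y)$ holds in $\mathcal{A}$ iff the classes of $x$ and $y$ are an $R_2$-paired pair, and $f$ maps paired classes to paired classes; it trivially preserves $=$. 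Hence $f$ is an isomorphism, establishing $\omega$-categoricity.

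The only point requiring care — and the reason the ``easily proved'' remark is justified — is verifying that $O$ depends only on the $R_1$-classes of its arguments, so that $f$ preserves $O$. This is precisely the content of the correctness of Definition~\ref{def2}'s $R_2$ relation together with Proposition~\ref{p2}: whether $O(x,y)$ holds is a function of $([x],[y])$, and $[x] R_2 [y]$ holds for exactly one class $[y]$, namely the partner of $[x]$. Thus there is no genuine obstacle; the argument is a routine back-and-forth made trivial by the rigidity of the class structure. (One could alternatively invoke the Ryll-Nardzewski theorem by checking that there are finitely many complete $n$-types for each $n$, but the direct isomorphism is cleaner given the explicit description of countable models already available.)
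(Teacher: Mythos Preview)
Your argument is correct and is exactly the proof the paper leaves implicit: the paper offers no proof beyond the remark ``it can be easily proved,'' and the natural route---choosing bijections rather than injections in the construction from Proposition~\ref{p3}---is precisely what you do. One minor slip: your reference to ``Definition~\ref{def2}'' points to the wrong definition (that label is the ``corresponding sequences'' definition in Section~3); the $R_2$ relation is the unlabeled definition preceding Proposition~\ref{p2}.
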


\begin{proposition}
	The theory $SAPP$ is not $\alpha$-categorical for every uncountable cardinality $\alpha$. 
\end{proposition}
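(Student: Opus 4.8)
The plan is to produce, for each uncountable cardinal $\alpha$, two models of $SAPP$ of cardinality $\alpha$ that are not isomorphic. The guiding idea is that every model of $SAPP$ is completely described, up to isomorphism, by combinatorial data that an isomorphism must preserve. Indeed, by the proposition on $R_1$ the relation $R_1$ partitions any model $\mathcal{A}$ into infinitely many infinite classes, and by Proposition~\ref{p2} the relation $R_2$ pairs these classes off two-by-two, each class having a unique $R_2$-partner distinct from itself. Since $R_1$ is $\emptyset$-definable from $O$ (by the formula $\forall z(\neg O(x,z)\vee O(y,z))$) and $R_2$ is read off from $O$ between representatives, any isomorphism $\mathcal{A}\to\mathcal{B}$ carries $R_1$-classes bijectively onto $R_1$-classes, respects the $R_2$-pairing, and preserves the cardinality of each class. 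Hence the multiset of unordered pairs of class-cardinalities is an isomorphism invariant; in particular the property ``every $R_1$-class is countable'' is preserved.

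First I would record a general construction of models with prescribed class data. Given any family $\{(\kappa_i,\kappa_i'):i\in I\}$ of pairs of infinite cardinals indexed by an infinite set $I$, form the disjoint union of sets $L_i,L_i'$ of cardinalities $\kappa_i,\kappa_i'$ and declare $O(a,b)$ to hold exactly when $a,b$ lie in the two sets $L_i,L_i'$ of a common index $i$ (in either order). One checks directly that this is a model of $SAPP$: $\lambda_3$--$\lambda_6$ are immediate from the definition; $\lambda_{1n}$ holds because each class is infinite, so every line has infinitely many perpendiculars; and $\lambda_{2n}$ holds because $I$ is infinite, so finitely many lines never exhaust the available pairs and one can always find an $s$ whose partner class avoids them.

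Then I would apply this recipe twice. For $\mathcal{A}_1$ take $\alpha$ pairs, each of the form $(\aleph_0,\aleph_0)$; its cardinality is $\alpha\cdot\aleph_0=\alpha$ and every $R_1$-class is countable. For $\mathcal{A}_2$ take one pair $(\alpha,\alpha)$ together with $\aleph_0$ pairs $(\aleph_0,\aleph_0)$; its cardinality is $2\alpha+\aleph_0=\alpha$, and it has an $R_1$-class of cardinality $\alpha>\aleph_0$. Both are models of $SAPP$ of cardinality $\alpha$, but $\mathcal{A}_1$ has all classes countable while $\mathcal{A}_2$ does not; by the invariant above they are not isomorphic, so $SAPP$ is not $\alpha$-categorical.

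The only real work is the two verifications flanking the construction: that freely prescribing class-sizes (subject to infinitude of each class and of the index set) really yields a model of $SAPP$, and that the class-cardinality data is genuinely isomorphism-invariant. Both are routine once the structural description via $R_1$ and $R_2$ is in hand, and the cardinal arithmetic $\alpha\cdot\aleph_0=\alpha$, $2\alpha+\aleph_0=\alpha$ is immediate for infinite $\alpha$. The main point is conceptual rather than technical: the $R_2$-pairing leaves the class-cardinalities as a free invariant that one may vary while holding the total cardinality fixed, which is exactly what obstructs categoricity above $\omega$.
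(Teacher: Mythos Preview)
Your proof is correct and follows essentially the same approach as the paper: build two models of cardinality $\alpha$ distinguished by the cardinalities of their $R_1$-classes (all countable in one, at least one of size $\alpha$ in the other), and use that isomorphisms preserve $R_1$-class cardinalities. The only cosmetic difference is in the specific witnesses---the paper's second model has \emph{all} classes of size $\alpha$ (it takes $B=\mathbb{Z}^*\times\alpha$), whereas your $\mathcal{A}_2$ mixes one large pair with countably many countable pairs---but the invariant used and the logical structure of the argument are identical.
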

\begin{proof}
	We denote by $\mathbb{Z}^{\ast}$ the set of all non-zero integers. Let $\alpha$ be an arbitrary uncountable cardinal. Let $A=\alpha\times\mathbb{Z}^{\ast}$; $B=\mathbb{Z}^{\ast}\times\alpha$. We consider the structure $\mathcal{A}=(A,O,=)$, where $O((x_1,x_2),(y_1,y_2))\stackrel{def}{\Longleftrightarrow}x_1=y_1$ and $x_2\cdot y_2<0$. We consider also the structure $\mathcal{B}=(B,O,=)$, where $O((x_1,x_2),(y_1,y_2))\stackrel{def}{\Longleftrightarrow}\frac{x_1}{y_1}=-1$. 
	
	Clearly $A$ and $B$ have cardinality $\alpha$.
	
	It can be easily proved the following lemma:
	
	\begin{lemma}\label{lemma9}
		\textbf{(i)} Every equivalence class of $A$ is countable and has the form $\{(\gamma,1),(\gamma,2),\ldots,(\gamma,n),\ldots\}$ or $\{(\gamma,-1),(\gamma,-2),\ldots,(\gamma,-n),\ldots\}$, where $\gamma<\alpha$;\\
		\textbf{(ii)} Every equivalence class of $B$ is uncountable and has the form $\{(s,\beta):\ \beta<\alpha\}$, where $s\in\mathbb{Z}^{\ast}$.
	\end{lemma}
	
	Using Lemma~\ref{lemma9}, it can be easily verified that $\mathcal{A}\models SAPP$ and $\mathcal{B}\models SAPP$. 
	
	Suppose for the sake of contradiction that there is an isomorphism $f:\ \mathcal{A}\rightarrow\mathcal{B}$. We will prove the following lemma:
	
	\begin{lemma}\label{lemma10}
		For any $x$, $y\in A$, $[x]=[y]$ iff $[f(x)]=[f(y)]$.
	\end{lemma}
	\begin{proof}
		$\bm{\Rightarrow)}$ Obviously $\neg O(f(x),f(y))$. There is $z\in B$ such that $O(f(y),z)$. Since $f$ is an isomorphism, $z=f(z_1)$ for some $z_1\in A$. Obviously $O(y,z_1)$ and hence $O(x,z_1)$; so $O(f(x),z)$; so $[z]R_2[f(x)]$ but we also have $[z]R_2[f(y)]$, so $[f(x)]=[f(y)]$.\\
		$\bm{\Leftarrow)}$ The proof is symmetric. $\Box$
	\end{proof}
	
	Let $\gamma<\alpha$. We have $(\gamma,1)\in A$. By Lemma~\ref{lemma9}, $[(\gamma,1)]$ is countable and has the form $\{(\gamma,1),(\gamma,2),\ldots,(\gamma,n),\ldots\}$. We have $f((\gamma,1))=(s,\delta)$, where $s\in\mathbb{Z}^{\ast}$, $\delta<\alpha$. By Lemma~\ref{lemma9}, $[(s,\delta)]=\{(s,\beta):\ \beta<\alpha\}$. Using Lemma~\ref{lemma10}, we get that for every positive integer $i$, $f((\gamma,i))=(s,\beta_i)$, where $\beta_i<\alpha$ and $\beta_i\neq\beta_j$ for $i\neq j$. Since $\alpha$ is uncountable cardinal, there is $\eta<\alpha$ such that $\eta\neq\beta_i$ for every $i$. We have $(s,\eta)=f(t)$ for some $t\in A$ and $[(s,\eta)]=[(s,\delta)]$. By Lemma~\ref{lemma10}, $[(s,\eta)]=[(s,\delta)]$ iff $[t]=[(\gamma,1)]$; so $[t]=[(\gamma,1)]$; so $t=(\gamma,i)$ for some positive integer $i$. We have $f((\gamma,i))=(s,\beta_i)=f(t)=(s,\eta)$ - a contradiction. Consequently there is no isomorphism $f:\ \mathcal{A}\rightarrow\mathcal{B}$.
	
	Thus $SAPP$ is not $\alpha$-categorical. $\Box$
\end{proof}

\end{document}